\def\ds{\displaystyle }
\def\fc{\hbox{\rm l\hskip -6pt 1}}
\def\M{{\bf M}}
\def\p{{\bf p}}
\def\q{{\bf q}}
\def\b{{\bf b}}
\def\d{{\bf d}}
\def\h{{\bf h}}
\def\r{{\bf r}}
\def\w{{\bf w}}
\def\z{{\bf z}}
\def\bgamma{{\boldsymbol{\gamma}}}
\def\bphi{{\boldsymbol{\phi}}}
\def\bpsi{{\boldsymbol{\psi}}}
\def\Q{{\mathcal{Q}}}
\def\O{{\mathcal{O}}}
\def\HH{{\bf H}}
\def\H1{{\bf H}^1}
\newtheorem{lemma}{Lemma}[section]
\newtheorem{theorem}{Theorem}[section]
\newenvironment{proof}
{\noindent {\bf Proof:}}{$\square$ \\}
\begin{document}

\begin{frontmatter}


 \title{A multi-region nonlinear age-size structured fish population model}

 \author{Blaise Faugeras\corauthref{cor1}}
 \ead{Blaise.Faugeras@ifremer.fr}
 \corauth[cor1]{}
 \author{and Olivier Maury}

 \address{IRD, CRHMT, Av. Jean Monnet, BP 171, 34200 S\`ete, France}

\begin{abstract}
The goal of this paper is to present a generic multi-region
nonlinear age-size structured fish population model, and to
assess its mathematical well-posedness. An initial-boundary value
problem is formulated. Existence and uniqueness of a positive weak
solution is proved. Eventually, a comparison result is derived : the population
of all regions decreases as the mortality rate increases in at least one region.
\end{abstract}

\begin{keyword}
Population dynamics \sep age-size structure \sep system of partial differential equations
\sep initial-boundary value problem \sep variational formulation \sep positivity.

\end{keyword}
\end{frontmatter}

\section{Introduction}
Fish population dynamics models are essential to provide
assessment of the fish abundance and fishing pressure. Their use
forms the basis of scientific advice for fisheries managements.
Discrete age structured models are most of the time used for fisheries
stock assessments \cite{Megrey:1989}. Indeed, ecologists,
mathematicians and population biologists have observed that the
age structure provides more realistic results at reasonable
computational expense for a wide variety of biological populations
(see \cite{Webb:1985}, \cite{DeAngelis:1993}, \cite{Swart:1994}, \cite{Arino:1995}).\\
In this paper we study a model which was first designed to
represent Atlantic bigeye tuna populations \cite{Maury:2004} but
which is also generic enough to be potentially usefull for various
fish species. Indeed, most fish populations share specific
characteristics which have to be taken into account in order
to model their dynamics in a realistic manner.\\
A first point concerning tuna fisheries is that they are highly
heterogeneous in space and time. This has an important impact on
their functioning. Important migrations of fish occur at various
scales and fish movements have to be explicitly represented.
Moreover, growth potentially varies with space that is to say with
the region of the ocean under consideration. Hence, fishes of the
same age can exhibit very different sizes depending of their
various history. Consequently a spatialized approach taking
explicitly into account the potential variability of growth in
space
has to be used.\\
A second point is that, because of non-uniform mortality over
sizes, bias on both growth and mortality estimates may result from
simply adding a gaussian size distribution to an age structured
model as it is generally done. It is reasonable to think that the
use of both age and size as structure variables
should enable to overcome this difficulty.\\
These are some of the principal problems of current stock
assessment models. That is why it is necessary to carry on the
modelling effort by proposing and testing more complex models.
This paper follows
this direction and its purpose is twofold.\\
First we describe a synthetic and generic model of population
dynamics in which both age and size are taken as structure
variables and in which fish movements among spatial regions are
explicitly represented. The model is a system of coupled partial
differential equations. Nonlocal nonlinearities appear in the
boundary conditions modelling recruitment that is to say the birth
law or density dependent fish reproduction. The relative
complexity of the model enables a direct and simultaneous comparison with all
the data available for tuna fisheries such as catches, fishing
efforts, size frequencies, tagging data, and otoliths increments.
This paper does not aim at getting into all the details of the
parameterizations used to represent a particular tuna population
and we refer to \cite{Maury:2004} for these points.\\
Our second and most important goal is to assess the mathematical
well-posedness of the model. The paper is organized as follows.
The equations of the model are presented in Section
\ref{sec:present}. Sections \ref{sec:preliminaries},
\ref{section:exist} and \ref{section:positive} deal with the
mathematical analysis of the model. In Section
\ref{sec:preliminaries} we formulate an initial-boundary value
problem, introduce a variational formulation and state our main
mathematical results. Existence of a unique weak solution is shown
in Section \ref{section:exist}. As often with nonlinear problems
the proof uses a fixed point argument. The methodology follows the
one proposed in \cite{Langlais:1985} for a scalar equation. It has
to be adapted in order to be able to deal with our nonlinear
system. We also show positivity of the solution and give a
comparison result in Section \ref{section:positive}. Namely we
prove that if the fish mortality rate increases in at least one
geographic region then the population globally decreases in all
regions.


\section{The model}
\label{sec:present} The dynamics of the population of fish is
described through density functions $p_i(t,a,l)$ where time $t \in
(0,T)$, age $a \in (0,A)$ and length $l \in (0,L)$ are continuous
variables and the subscript $i \in [1:N]$ refers to the geographic
zone or region under consideration. The number of fish of age
between $a_1$ and $a_2$, of length between $l_1$ and $l_2$ at time
$t$ in region $i$ is given by the integral
\begin{equation*}
\ds \int_{a_1}^{a_2} \int_{l_1}^{l_2} p_i(t,a,l)dl da,
\end{equation*}

Let us set ${\mathcal{O}}=(0,T)\times(0,A)$ and
${\mathcal{Q}}={\mathcal{O}} \times (0,L)$. The time evolution of
the population given by Eq. \ref{eqn:e1-1} includes the following
processes.

In region $i$, as time goes on and fishes grow older, their length
increases with a growth rate $\gamma_i$.
In a fish population individuals of the same age can often differ
markedly in size \cite{Pfister:2002}. This variability in growth
can result from many different mechanisms, including genetic or
behavorial traits that confer different performances to
individuals, and factors such as environmental heterogeneity and
variability \cite{Beverton:1996}. In fishery science, this
variability is usually taken into account in age-structured models
using a length-at-age relation perturbed by a Gaussian noise with
a length dependent standard deviation (see for example
\cite{Fournier:1990}). The model discussed here is
length-structured and uses a diffusion term in the length variable
with dispersion rate $d_i$ to account for individuals having the
same age but different lengths. The advection-diffusion term in
length can be seen as the limit of a random walk model in which
each individual grows with an average velocity, but has at each
time step a small binomial probability to grow faster or slower
than this average (see the book by Okubo \cite{Okubo:1980} for
more details).

The model also describes mortality and migration of individuals.
The mortality rate is split into natural mortality $\mu_i$ and
fishing mortality $f_i$. Let also $m_{i\rightarrow j}$ be the
migration rate of individuals going
from region $i$ to region $j$ ($m_{i\rightarrow j}=0$ if regions $i$ and $j$ are not adjacent).\\
The density functions $p_i$ for $i \in [1:N]$ follow the balance
law:
\begin{equation}
\label{eqn:e1-1} \left \lbrace \begin{array}{ll}
\partial_t p_i(t,a,l) + \partial_a p_i(t,a,l)=&
\partial_l(d_i(t,a,l) \partial_l p_i(t,a,l)) - \partial_l(\gamma_i(t,a,l) p_i(t,a,l))\\
&+ \ds \sum_{j \ne i}^N m_{j \rightarrow i}(t,a,l) p_j(t,a,l)\\
&- (\ds \sum_{j \ne i}^N m_{i \rightarrow j}(t,a,l) ) p_i(t,a,l)\\
& -(\mu_i(t,a,l) + f_i(t,a,l))p_i(t,a,l),\quad (t,a,l) \in
\mathcal{Q},
\end{array}
\right.
\end{equation}
These equations have to be completed with initial and boundary conditions.\\
Homogeneous Neumann boundary conditions at $l=0$ and $l=L$ express
the fact that the length of individuals can not reach negative
values or values larger than $L$.
\begin{equation}
\label{eqn:e1-4}
\partial_l p_i(t,a,0)=\partial_l p_i(t,a,L)=0,\quad (t,a)\in \mathcal{O}.
\end{equation}
The initial age and size distribution is prescribed,
\begin{equation}
\label{eqn:e1-2} p_i(0,a,l)=p_i^0(a,l),\quad (a,l) \in
(0,A)\times(0,L).
\end{equation}
We also need a boundary condition for $a=0$ that is to say a
recruitment law. It is written as:
\begin{equation}
\label{eqn:e1-3} p_i(t,0,l)=\beta_i(t,l,P_i(t)),\quad (t,l)\in
(0,T)\times(0,L),
\end{equation}
The length of recruited fish is assumed to lie between $0$ and a
small constant length $L_b$. Moreover we denote by $L_m$ the
minimal length of fishes which have reached maturity. $L_b$ and
$L_m$ satisfy $0<L_b<L_m<L$. The stock spawning biomass is
calculated as
\begin{equation}
\label{eqn:e2} P_i(t)=\ds \int_0^A \int_{L_m}^L
w_i(t,a,l)p_i(t,a,l)dlda,
\end{equation}
where $w_i$ is a weighting function. Finally we use a Beverton and
Holt \cite{Beverton:1996} stock-recruitment relation in each
region and obtain,
\begin{equation}
\label{eqn:e3} \beta_i(t,l,P)=\fc_{[0,L_b]}(l) \psi_i(t) \ds
\frac{P}{\theta_i + P},
\end{equation}
where  $\fc_{[0,L_b]}$ is the usual characteristic function,
$\theta_i > 0$ is a constant parameter and $\psi_i(t)$ is a given
function of time used to parameterize fluctuations of the
recruitment not taken into account in the Beverton and Holt
relation.

\section{Main assumptions and preliminary results}
\label{sec:preliminaries}

In this section we set the mathematical frame in which the
analysis is conducted. We formulate the main assumptions which are
made on the data of the model, give the definition of a weak
solution to the initial-boundary value problem and state our
results in Theorems \ref{theo:exist-unique} and
\ref{theo:comparaison}.

\subsection{Functional spaces}
Let us introduce the functional spaces which we use in the remainder of this work.\\
The vectorial notation $\p =(p_1,...,p_N)^T$ is used. The usual
scalar product of two vectors $\p,\q \in \mathbb{R}^N$ is denoted
by $\p.\q$ and the norm of
$\p$ by $|\p|$.\\
$\HH$ and $\H1$ are the separable Hilbert spaces defined by $\HH
=(L^2(0,L))^N$ and $\H1 = (H^1(0,L))^N$. $\HH$ is equipped with
the scalar product
$$
(\p,\q)_{\HH}= \ds \int_{0}^{L} \p(l).\q(l)dl.
$$
We denote by $||.||_{\HH}$ the induced norm on ${\bf H}$.\\
$\H1$ is equipped with the scalar product
$$(\p,\q)_{\H1}= \ds
\int_{0}^{L} \p(l).\q(l)dl+ \displaystyle \int_{0}^{L}
\partial_l \p(l).\partial_l \q(l)dl.
$$
We denote by $||.||_{\H1}$, the induced norm on $\H1$.\\
By $<.,.>$ we denote the duality between $\H1$ and its dual $(\H1)'$.\\
$L^2(\O,\HH)$ (resp. $L^2(\O,\H1)$) denotes the Hilbert space of
measurable functions of $\O$ with values in $\HH$ (resp. $\H1$)
such that\\
$||\p||_{L^2(\O,\HH)} = ( \ds \int_\O ||\p(t,a,.)||_{\HH}^2 dt da)^{1/2} < \infty$
(resp. $||\p||_{L^2(\O,\H1)} < \infty$). \\
We also make use of the notation $V=L^2(\O,\H1)$ and
the dual space $V'=L^2(\O,(\H1)')$. By $<<.,.>>$ we denote the duality between $V$ and its dual $V'$.\\
The partial derivatives $\partial_t$ and $\partial_a$ denote
differentiation in ${\mathcal{D}}'(\O,(\H1)')$ and $D$ stands for
$\partial_t + \partial_a$.\\
We will have to use he following trace result.
\begin{lemma}
\label{lemma:trace}
Let $\p,\q \in V$ such that $D\p,D\q \in V'$. It holds that:\\
For all $t_0 \in (0,T)$ and all $a_0 \in (0,A)$, $\p$ has a trace
at $t=t_0$ belonging to $(L^2((0,A)\times(0,L)))^N$ and at $a=a_0$
belonging to $(L^2((0,T)\times(0,L)))^N$. The trace applications
are continuous in the strong and weak topology. Moreover the
following integration by parts formula holds,
$$
\begin{array}{ll}
\ds \int_\O [<D\p,\q>+<D\q,\p>] dtda=&\ds \int_0^A \int_0^L [\p.\q(T,a,l)-\p.\q(0,a,l)]dadl\\[7pt]
&+ \ds \int_0^T \int_0^L [\p.\q(t,A,l)-\p.\q(t,0,l)]dtdl
\end{array}
$$
\end{lemma}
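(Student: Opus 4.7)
The plan is to reduce the statement to the classical one-dimensional trace result of Lions--Magenes: if $u \in L^2(I, X)$ with $u' \in L^2(I, X')$ for a Gelfand triple $X \hookrightarrow Y \hookrightarrow X'$, then $u \in C(\bar I, Y)$, the trace map is continuous, and the standard integration-by-parts formula holds on $I$. Here $X = \mathbf{H}^1$, $Y = \mathbf{H}$, and we will apply this result on each characteristic of the operator $D = \partial_t + \partial_a$.

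First I would perform the characteristic change of variables $\xi = t$, $\eta = t-a$, under which $D$ becomes $\partial_\xi$ and $\mathcal{O}$ becomes the parallelogram $\Pi = \{(\xi,\eta): 0 < \xi < T,\; \xi - A < \eta < \xi\}$. Setting $\tilde\p(\xi,\eta,l) = \p(\xi,\xi-\eta,l)$, Fubini gives that for a.e.\ $\eta \in (-A,T)$ the slice $\tilde\p(\cdot,\eta,\cdot)$ belongs to $L^2(I_\eta, \mathbf{H}^1)$ with $\partial_\xi \tilde\p(\cdot,\eta,\cdot) \in L^2(I_\eta, (\mathbf{H}^1)')$, where $I_\eta = (\max(0,\eta), \min(T,\eta+A))$. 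Applying the one-dimensional Lions--Magenes result slicewise yields $\tilde\p(\cdot,\eta,\cdot) \in C(\bar I_\eta, \mathbf{H})$ for a.e.\ $\eta$, together with the estimate
$$\sup_{\xi \in \bar I_\eta} \|\tilde\p(\xi,\eta,\cdot)\|_{\mathbf{H}}^2 \le C\bigl(\|\tilde\p(\cdot,\eta,\cdot)\|_{L^2(I_\eta,\mathbf{H}^1)}^2 + \|\partial_\xi\tilde\p(\cdot,\eta,\cdot)\|_{L^2(I_\eta,(\mathbf{H}^1)')}^2\bigr).$$
Integrating this inequality in $\eta$ and reverting to $(t,a)$ variables produces the trace $\p(t_0,\cdot,\cdot) \in L^2((0,A)\times(0,L))^N$ for every $t_0 \in [0,T]$, with continuity of the trace map from $\{\p \in V: D\p \in V'\}$ (endowed with its graph norm) into $L^2((0,A)\times(0,L))^N$. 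The trace at $a = a_0$ is obtained identically; weak continuity is then automatic from strong continuity together with boundedness.

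For the integration by parts formula, I would first verify it on the dense subspace of smooth functions. Density of $\mathcal{C}^\infty(\bar{\mathcal{O}},\mathbf{H}^1)$ in $\{\p \in V: D\p \in V'\}$ can be obtained by convolving $\tilde\p$ in the $\xi$-variable along characteristics (after a suitable extension in $\xi$ across the ends of each $I_\eta$, e.g.\ by reflection), then mollifying in $l$; this regularization preserves the structure $D\p \in V'$ and converges in graph norm. For smooth $\p,\q$ the identity
$$\int_\mathcal{O}[\langle D\p,\q\rangle + \langle D\q,\p\rangle]\,dtda = \int_\mathcal{O} D(\p\cdot\q)\,dtda$$
is a direct consequence of the product rule, and an application of the divergence theorem on the rectangle with vector field $(\p\cdot\q,\p\cdot\q)$ yields the claimed boundary terms. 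Passing to the limit in both sides using the established continuity of the trace maps delivers the formula for arbitrary $\p,\q$ in the class considered.

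The main technical obstacle is the density step: the operator $D$ is characteristic with respect to the rectangle, so standard mollification in $(t,a)$ does not cooperate with the boundary and one must work in characteristic coordinates. Once the extension across the oblique ends of each $I_\eta$ is set up so that the norms $\|\cdot\|_V$ and $\|D\cdot\|_{V'}$ remain controlled after mollification, everything else follows from the one-dimensional Lions--Magenes machinery applied fiberwise and integrated in $\eta$.
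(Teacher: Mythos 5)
Your overall route is sound, and it is worth noting that the paper itself offers no proof at all here: it simply refers to Lemma~0 of Garroni--Langlais and to Lions. What you propose -- characteristic coordinates $(\xi,\eta)=(t,t-a)$, the one-dimensional Lions--Magenes trace theory applied fibrewise on the segments $I_\eta$ with the Gelfand triple ${\bf H}^1\hookrightarrow\HH\hookrightarrow({\bf H}^1)'$, and a density/mollification-along-characteristics argument for the Green formula -- is precisely the standard proof of the cited scalar result, so your attempt is strictly more informative than the paper's one-line citation, and the extension to $N$ components is immediate.

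There is, however, one concrete soft spot. The constant in the fibrewise estimate $\sup_{\xi\in\bar I_\eta}\|\tilde{\p}(\xi,\eta,\cdot)\|_{\HH}^2\le C(\cdots)$ necessarily behaves like $1+|I_\eta|^{-1}$ (it is obtained by averaging $\|u(\xi)\|_{\HH}^2=\|u(\xi_0)\|_{\HH}^2+2\int_{\xi_0}^{\xi}\langle u',u\rangle$ over $\xi_0\in I_\eta$), and $|I_\eta|\to 0$ as $\eta\to -A^+$ or $\eta\to T^-$, i.e.\ for characteristics hugging the corners $(0,A)$ and $(T,0)$ of $\O$. Integrating in $\eta$ therefore yields an $L^2$ trace only for $t_0\in(0,T)$ and $a_0\in(0,A)$, where the relevant $\eta$ satisfy $|I_\eta|\ge\min(t_0,T-t_0,A)>0$; your claim that the trace lies in $L^2$ for \emph{every} $t_0\in[0,T]$ is false. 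Indeed, take $\p(t,a,l)=c(t-a)\phi(l)$ with $\phi\in(H^1(0,L))^N$, $c(\eta)=(\eta+A)^{-3/4}$ for $\eta\in(-A,-A+\epsilon)$ and $c=0$ otherwise: then $D\p=0$ and $\p\in V$ since $\int c(\eta)^2|I_\eta|\,d\eta<\infty$, yet $\p(0,a,\cdot)=c(-a)\phi$ fails to be square integrable near $a=A$. This is not merely cosmetic, because the Green formula you prove in the last step involves exactly the four boundary traces at $t\in\{0,T\}$ and $a\in\{0,A\}$ that your fibrewise estimate does not control, so ``passing to the limit using the established continuity of the trace maps'' does not close as written. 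The standard repair is to observe that along each characteristic the two endpoint values of $\|\tilde{\p}(\cdot,\eta,\cdot)\|_{\HH}^2$ differ by $2\int_{I_\eta}\langle\partial_\xi\tilde{\p},\tilde{\p}\rangle$, which is integrable in $\eta$; hence the inflow traces (at $t=0$ and $a=0$) are square integrable if and only if the outflow traces (at $t=T$ and $a=A$) are, and the Green formula should be passed to the limit with the boundary terms grouped characteristic by characteristic (or established first on $(\delta,T)\times(\delta,A)$ and then letting $\delta\to0$). In the paper the lemma is only ever applied to functions whose traces at $t=0$ and $a=0$ are prescribed $L^2$ data, so this refinement is what actually makes the uses of the formula legitimate.
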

\begin{proof}
This result is the extension to dimension $N$ of Lemma 0 in
\cite{Garroni:1982}. Also see \cite{Lions:1968}.
\end{proof}

\noindent We will also have to consider the space ${\bf
L}^{\infty} = (L^{\infty}(\Q))^N$. $L^{\infty}(\Q)$ is a Banach
space equipped with the norm $||p_i||_{\infty} = {\mathrm{inf}}
\lbrace M;|p_i(t,a,l)| \le M \ {\mathrm{a.e.}} \ {\mathrm{in}} \
\Q \rbrace$. Similarly ${\bf L}^{\infty}$ is a Banach space
equipped with the norm $||\p||_{\infty} =
\underset{i\in[1:N]}{max}||p_i||_{\infty}$.

\subsection{Assumptions on the data and preliminary transformation of the system}

The movements rates $m_{i \rightarrow j}$ are assumed to satisfy
\begin{itemize}
\item $m_{i \rightarrow j}(t,a,l) \ge 0$ a.e in $\Q$, $m_{i \rightarrow j} \in L^{\infty}(\Q)$.
\end{itemize}
We define the matrix of movements $\M$ by
$$
M_{ij}= \left \lbrace \begin{array}{l}
m_{j \rightarrow i} \quad {\mathrm{if}}\ i \ne j,\\
- \ds \sum_{k \ne i}^{N} m_{i \rightarrow k} \quad {\mathrm{if}}\ i = j.\\
\end{array}
\right.
$$
Hence the term, $[\ds \sum_{j \ne i}^N m_{j \rightarrow i} p_j
- (\ds \sum_{j \ne i}^N m_{i \rightarrow j} ) p_i]$, in Eq. \ref{eqn:e1-1} can be written in matrix form as $(\M\p)_i$.\\
\noindent Concerning the diffusion coefficients $d_i$, the growth
rates $\gamma_i$ and the natural and fishing mortality rates
$\mu_i$ and $f_i$, we make the following assumptions for all $i
\in [1:N]$:
\begin{itemize}
\item $d_i(t,a,l) \ge d^0 > 0$, a.e in $\Q$, $d_i \in L^{\infty}(\Q)$,
\item $\gamma_i(t,a,l)$ is differentiable with respect to $l$, and $\gamma_i, \partial_l \gamma_i \in L^{\infty}(\Q)$,
\item $\mu_i(t,a,l),f_i(t,a,l) \ge 0$, a.e in $\Q$, $\mu_i,f_i \in L^{\infty}(\Q)$. We also make use of the
notation $z_i=\mu_i+f_i$.
\end{itemize}
\noindent In the formulation of the recruitment process (Eq.
\ref{eqn:e1-3}) $\psi_i$ and $w_i$ satisfy:
\begin{itemize}
\item $\psi_i(t) \ge 0$ a.e in $(0,T)$ and $\psi_i \in L^{\infty}(0,T)$,
\item $w_i(t,a,l) \ge 0$ a.e in $\Q$ and $w_i \in L^{\infty}(\Q)$.
\end{itemize}
\noindent The initial distributions $p_i^0(a,l)$ satisfies for all
$i \in [1:N]$:
\begin{itemize}
\item $p_i^0(a,l) \ge 0$ a.e in $\Q$, $p_i^0 \in L^{2}((0,A)\times(0,L))$.
\end{itemize}
\noindent In order to prove our existence result it is convenient
to perform a change of unknown function: $\p$ satisfies
(\ref{eqn:e1-1})-(\ref{eqn:e1-3}) if and only if
$\hat{\p}=e^{-\lambda t}\p$ is a solution to the same system where
$-(\mu_i+f_i)p_i$ is replaced $-(\mu_i+f_i+\lambda)p_i$ in Eq.
\ref{eqn:e1-1} and $\beta_i$ in the expression of the boundary
condition at $a=0$ (Eq. \ref{eqn:e1-3}) is replaced by
\begin{equation}
\label{eqn:e4} \hat{\beta}_i(t,l,\hat{P}_i(t))= \fc_{[0,L_b]}(l)
\psi_i(t) \ds \frac{\hat{P}_i(t)}{\theta_i e^{-\lambda t} +
\hat{P}_i(t)},
\end{equation}
\begin{equation}
\label{eqn:e5} \hat{P}_i(t)=\ds \int_0^A \int_{L_m}^L
w_i(t,a,l)\hat{p}_i(t,a,l)dlda.
\end{equation}
In the remaining part of this paper this change of unknown is
implicitly done and we omit the $\hat{p_i}$ notation. The constant
$\lambda$ will be fixed to a convenient value below. Moreover, the
possible nullification of the term $\theta_i e^{-\lambda t} +
\hat{P}_i(t)$, invites us to define,
\begin{equation}
\label{eqn:e6} \beta_i(t,l,P_i(t))= \fc_{[0,L_b]}(l) \psi_i(t) \ds
\frac{P_i(t)}{\theta_i e^{-\lambda t} + |P_i(t)|}.
\end{equation}
This formulation will be used in the following. We will show that
if initial distributions, $p_i^0$ are nonnegative then $p_i \ge 0$
a.e. in $\Q$, thus the two formulations are equivalent.

\subsection{Variational formulation and weak solutions}

Formally multiplying Eq. \ref{eqn:e1-1} by a function $q_i$ and
integrating by parts on $(0,L)$ yields to the definition of the
following linear forms. For $p_i,q_i \in H^1(0,L)$ let us define,
\begin{equation}
\label{eqn:bi} b_i(p_i,q_i)=\ds \int_0^L d_i \partial_l p_i
\partial_l q_i dl + \ds \int_0^L \gamma_i (\partial_l p_i)q_i dl
+\ds \int_0^L (z_i + \partial_l \gamma_i + \lambda )p_i q_i dl,
\end{equation}
\begin{equation}
\label{eqn:ci} c_i(\p,q_i)=-\ds \int_0^L (\M\p)_iq_i dl,
\end{equation}
\begin{equation}
\label{eqn:ei} e_i(\p,q_i)=b_i(p_i,q_i)+c_i(\p,q_i)
\end{equation}
Summing over $i$, we define for $\p,\q \in \H1$, the bilinear form
$e(\p,\q)$ by,
\begin{equation}
\label{eqn:e} e(\p,\q)=\ds \sum_{i=1}^N e_i(\p,q_i)
\end{equation}

\begin{lemma}
\label{lemmacoercive} For $\lambda > (\ds\frac{1}{2d^0}
||\gamma||^2_\infty +||\partial_l \gamma||^2_\infty +N
||\M||_\infty )$, the bilinear form $e(.,.)$ is continuous and
coercive on $\H1 \times \H1$, i.e there exist constants $C_1>0$
and $C_2>0$ such that
\begin{equation}
\label{continuous} |e(\p,\q)| \le C_1 ||\p||_{\H1} ||\q||_{\H1},\
\forall \p,\q \in \H1,
\end{equation}
\begin{equation}
\label{coercive} e(\p,\p) \ge C_2 ||\p||_{\H1}^2,\ \forall \p \in
\H1.
\end{equation}
\end{lemma}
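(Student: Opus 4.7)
The plan is to verify continuity and coercivity term by term, exploiting the uniform $L^\infty$ bounds on the coefficients, Cauchy--Schwarz, and a single application of Young's inequality to handle the only gradient--value cross term that appears in $b_i$.

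For continuity (\ref{continuous}), I would estimate each of the four integrals making up $e_i(\p,\q)=b_i(p_i,q_i)+c_i(\p,q_i)$ separately. The diffusion term is bounded by $||d_i||_\infty||\partial_l p_i||_{L^2}||\partial_l q_i||_{L^2}$; the drift term by $||\gamma_i||_\infty||\partial_l p_i||_{L^2}||q_i||_{L^2}$; the zero--order term by $(||z_i||_\infty+||\partial_l\gamma_i||_\infty+\lambda)||p_i||_{L^2}||q_i||_{L^2}$; and the migration term via the entry-wise estimate $|(\M\p)_i|\le||\M||_\infty\sum_j|p_j|$. Summing over $i\in[1{:}N]$ and applying Cauchy--Schwarz in $\mathbb{R}^N$ yields (\ref{continuous}) with a constant $C_1$ depending on $N$, on $\lambda$, and on the $L^\infty$ norms of the data.

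For coercivity, I would evaluate $e(\p,\p)=\sum_i b_i(p_i,p_i)+\sum_i c_i(\p,p_i)$. The only term without a favourable sign is $\int_0^L\gamma_i(\partial_l p_i)p_i\,dl$, so Young's inequality in the form $|\gamma_i\,\partial_l p_i\,p_i|\le\frac{d^0}{2}(\partial_l p_i)^2+\frac{||\gamma||_\infty^2}{2d^0}p_i^2$ absorbs exactly half of the diffusion. Using $d_i\ge d^0$, $z_i\ge 0$ and $\partial_l\gamma_i\ge -||\partial_l\gamma||_\infty$ almost everywhere, this yields
$$
b_i(p_i,p_i)\ \ge\ \frac{d^0}{2}||\partial_l p_i||_{L^2}^2+\Big(\lambda-\frac{||\gamma||_\infty^2}{2d^0}-||\partial_l\gamma||_\infty\Big)||p_i||_{L^2}^2.
$$

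It remains to control the migration contribution $\sum_i c_i(\p,p_i)=-\int_0^L(\M\p).\p\,dl$. The entry-wise bound $|M_{ij}|\le||\M||_\infty$ combined with $(\sum_i|p_i|)^2\le N|\p|^2$ gives $|(\M\p).\p|\le N||\M||_\infty|\p|^2$, hence $\sum_i c_i(\p,p_i)\ge -N||\M||_\infty||\p||_{\HH}^2$. Adding the two contributions, the hypothesis on $\lambda$ makes the coefficient of $||\p||_{\HH}^2$ strictly positive, and one obtains (\ref{coercive}) by taking $C_2$ to be the minimum of $d^0/2$ and that coefficient. The only delicate point is the calibration of the Young parameter: one must split the diffusion so that the residual coefficient in front of $(\partial_l p_i)^2$ stays positive while the generated zero--order penalty is precisely the $\frac{1}{2d^0}||\gamma||_\infty^2$ term appearing in the threshold for $\lambda$.
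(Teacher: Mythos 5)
Your proof is correct and follows essentially the same route as the paper: Cauchy--Schwarz term by term for continuity, Young's inequality with parameter $d^0$ to absorb the drift cross term into half the diffusion, sign conditions on $z_i$ and the lower bound $d_i\ge d^0$, and the crude bound $N||\M||_\infty$ on the migration part. The only (cosmetic) divergence is that your zero-order penalty correctly involves $||\partial_l\gamma||_\infty$ rather than the squared quantity $||\partial_l\gamma||_\infty^2$ appearing in the paper's threshold for $\lambda$, which looks like a typo there rather than a gap in your argument.
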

\begin{proof}
Using Cauchy-Schwarz inequality we obtain,
$$
|\ds \sum_i b_i(p_i,q_i)| \le (||\d||_\infty +||\bgamma||_\infty
+||\z||_\infty +||\partial_l \bgamma||_\infty + \lambda)
||\p||_{\H1}||\q||_{\H1}
$$
and
$$
|\ds \sum_i c_i(\p,q_i)| = | \ds \sum_i \sum_j \int_0^L M_{ij} p_j
q_i dl| \le ||\M||_\infty N ||\p||_{\H1} ||\q||_{\H1}.
$$
which proves (\ref{continuous}).\\
Again using Cauchy-Schwarz inequality yields
$$
|\ds \int_0^L \gamma_i \partial_l p_i p_i|dl \le ||\gamma||_\infty
||\partial_l p_i||_{L^2(0,L)} ||p_i||_{L^2(0,L)}.
$$
Young's inequality then gives for any $\alpha >0$
$$
|\ds \int_0^L \gamma_i \partial_l p_i p_i dl | \le
\ds\frac{\alpha}{2}||\partial_l p_i||_{L^2(0,L)}^2 +
\ds\frac{1}{2\alpha} ||\gamma||^2_\infty ||p_i||^2_{L^2(0,L)}.
$$
Therefore we have that
$$
\ds \sum_i \int_0^L \gamma_i \partial_l p_i p_i dl \ge
-\ds\frac{\alpha}{2} ||\partial_l \p||_{\HH}^2 -
\ds\frac{1}{2\alpha} ||\gamma||^2_\infty ||\p||^2_{\HH}.
$$
Now since $\mu_i$ and $f_i$ are positive and $d_i$ is bounded
below by $d^0$, it follows that
$$
e(\p,\p) \ge (d^0 -\ds\frac{\alpha}{2})||\partial_l \p||_{\HH}^2
+(\lambda-(\ds\frac{1}{2\alpha} ||\gamma||^2_\infty +||\partial_l
\gamma||^2_\infty +N ||\M||_\infty ))||\p||^2_{\HH}.
$$
It is possible to choose $\alpha=d^0$ and $\lambda$ such that $
\lambda^0=(\lambda-(\ds\frac{1}{2d^0} ||\gamma||^2_\infty
+||\partial_l \gamma||^2_\infty +N ||\M||_\infty )) >0$ and
$C_2=min(\ds\frac{d^0}{2},\lambda^0)$.
\end{proof}

We can now give the definition of a weak solution to the
initial-boundary value problem (\ref{eqn:e1-1})-(\ref{eqn:e1-3})
and state the results which are shown in Section
\ref{section:exist} and \ref{section:positive}. \noindent A weak
solution to the initial-boundary value problem
(\ref{eqn:e1-1})-(\ref{eqn:e1-3})
is a vector valued function $\p$ satisfying the following {\bf problem (P)}:\\
Find
\begin{equation}
\label{P1} \p \in V,\ {\mathrm{such}}\ {\mathrm{that}}\ D\p \in
V',
\end{equation}
solution of
\begin{equation}
\label{P2} \ds \int_\O < D\p,\q>dtda+\ds \int_\O e(\p,\q)dtda=0,\
\forall \q \in V,
\end{equation}
\begin{equation}
\label{P3} \p(0,a,l)=\p^0(a,l) \quad {\mathrm{a.e}}\
{\mathrm{in}}\ (0,A)\times(0,L),
\end{equation}
\begin{equation}
\label{P4} \p(t,0,l)={\beta}(t,l,{\bf P}(t)) \quad {\mathrm{a.e}}\
{\mathrm{in}}\ (0,T)\times(0,L).
\end{equation}

In Section \ref{section:exist} it is proved that:
\begin{theorem}
\label{theo:exist-unique} There exists a unique solution $\p$ to
{\bf problem (P)}.
\end{theorem}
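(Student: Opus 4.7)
The plan is to recast problem (P) as a fixed-point equation in $V$, in the spirit of the scalar argument of \cite{Langlais:1985} adapted to the coupled vector system. The two structural features that make this work are that the nonlocal nonlinearity enters only through the scalar moments $P_i(t)$, and that, thanks to formulation (\ref{eqn:e6}), $\bbeta$ is uniformly bounded by $\bpsi$ and Lipschitz in its scalar argument. Together with Lemma \ref{lemmacoercive}, which governs the linear part of the system, these two features are exactly what is needed to carry out a contraction argument on the nonlinear problem.

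First, I would solve the linearized problem. Given $\tilde{\p}\in V$, freeze the boundary datum $\bbeta(\cdot,\cdot,\tilde{\bf P}(\cdot))$ built from $\tilde{P}_i(t)=\int_0^A\int_{L_m}^L w_i(t,a,l)\tilde{p}_i(t,a,l)\,dl\,da$, and look for $\p\in V$ with $D\p\in V'$ satisfying (\ref{P2})--(\ref{P3}) and the trace condition $\p(\cdot,0,\cdot)=\bbeta(\cdot,\cdot,\tilde{\bf P})$. Subtracting a lifting of this trace data reduces matters to a homogeneous linear evolution, which I would solve by a Galerkin scheme on a Hilbert basis of $\H1$: coercivity of $e$ produces uniform energy estimates, weak compactness extracts a limit in $V$, and Lemma \ref{lemma:trace} makes the initial and lateral traces meaningful and identifies them. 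This defines a solution map $\Phi:\tilde{\p}\mapsto\p$ on $V$. Testing the weak equation with $\q=\p$ and using Lemma \ref{lemma:trace} gives, thanks to the correct sign of the outgoing traces at $t=T$ and $a=A$, an a priori bound on $\|\Phi(\tilde{\p})\|_V$ depending only on $\|\p^0\|$ and $L_b T\|\bpsi\|_\infty^2$, so $\Phi$ stabilizes a closed ball of $V$. For $\tilde{\p}_1,\tilde{\p}_2\in V$, the difference $\w=\Phi(\tilde{\p}_1)-\Phi(\tilde{\p}_2)$ solves the same linear problem with zero initial condition and boundary input $\bbeta(\tilde{\bf P}_1)-\bbeta(\tilde{\bf P}_2)$, whose $L^2$-norm is controlled by $\|\tilde{\p}_1-\tilde{\p}_2\|_V$ via the Lipschitz property of $\bbeta$ and Cauchy--Schwarz in the definition of $P_i$. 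This yields a strict contraction of $\Phi$ on $L^2((0,t_1)\times(0,A),\H1)$ for $t_1$ small enough; Banach's theorem then provides a unique fixed point, extended to $[0,T]$ by iteration, which is precisely the unique weak solution of (P).

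The main obstacle I anticipate is transferring the contraction bound from the boundary input at $a=0$ back to the full $V$-norm of the difference $\w$. Since the Lipschitz constant of $\bbeta$ in (\ref{eqn:e6}) is of order $e^{\lambda t}/\theta_i$, no immediate global contraction is available on $[0,T]$, and the short-interval iteration must be coordinated with the $\lambda$-shift chosen in Lemma \ref{lemmacoercive}; alternatively one could replace the norm on $V$ by an equivalent exponentially $t$-weighted norm so that the contraction holds in one shot. This is precisely the point at which the adaptation of the scalar argument of \cite{Langlais:1985} to the present $N$-component coupled system, with its matrix $\M$ of migration rates entering the bilinear form $e$, requires extra care.
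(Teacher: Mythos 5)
Your proposal follows the same overall architecture as the paper's proof: solve the linear problem with a frozen recruitment datum, establish Lipschitz continuity of $\hat{\p}\mapsto\bbeta(\cdot,\cdot,\hat{{\bf P}})$ (the paper's Lemma~\ref{lemma2}), and close with a Banach fixed point (Lemma~\ref{lemma3}). Two points genuinely differ. First, for the linear step the paper does not use a Galerkin scheme: it realizes $D=\partial_t+\partial_a$ with homogeneous initial and lateral data as a maximal monotone operator from $V$ to $V'$ (via the explicit translation semigroup along the characteristics $t-a=\mathrm{const}$), solves $A\p+E\p=\h$ for the bounded coercive operator $E$ induced by $e(\cdot,\cdot)$, and then lifts the inhomogeneous data by smooth approximations $\bphi^n$ and passes to the weak limit. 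Your Galerkin route is viable in principle but less economical here, because projecting only in the $l$ variable leaves a coupled first-order hyperbolic system in $(t,a)$ for the coefficients, which you would still have to solve along characteristics; the semigroup argument does this once and for all. Second, and more substantively, your fallback of a contraction on a short interval $(0,t_1)$ does not work in the $L^2$-in-time framework you set up: the Lipschitz estimate for the recruitment operator restricted to $(0,t_1)$ carries a constant independent of $t_1$ (there is no factor vanishing as $t_1\to 0$, unlike in the $L^\infty$/Volterra setting), so nothing becomes small. The alternative you mention, an exponentially $t$-weighted norm, is the correct mechanism and is exactly what the paper implements through the change of unknown $e^{-\lambda t}\p$: since any $\lambda$ above the threshold of Lemma~\ref{lemmacoercive} is admissible, one writes $\lambda=\lambda_1+\lambda_2$ and uses the surplus coercivity $e(\p,\p)\ge\lambda_2\|\p\|^2_{\HH}$ to absorb the Lipschitz constant $C_3$ of the boundary operator, giving a global contraction on $L^2(\O,\HH)$ in one shot. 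Your anticipated obstacle, that this constant grows like $e^{\lambda T}/\theta_i$, comes from (\ref{eqn:e4})--(\ref{eqn:e6}) as printed; if the recruitment is transformed consistently with $\hat{\p}=e^{-\lambda t}\p$ (a factor $e^{-\lambda t}$ should multiply the quotient), the Lipschitz constant in the hatted variables is of order $\|\bpsi\|_\infty/\theta_i$, independent of $\lambda$, and the global contraction closes with no localization in time. With that adjustment your plan is sound and coincides in substance with the paper's.
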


{\bf Notation}: $\p(t,a,l)$ and $\q(t,a,l)$ being vector valued
functions,
$\p \le \q$ means that $p_i \le q_i$ a.e. in $\Q$ for all $i \in [1:N]$.\\
With this notation, it is proved in Section \ref{section:positive}
that:
\begin{theorem}
\label{theo:comparaison}
The solution, $\p$, to {\bf problem (P)} is nonnegative a.e in $\Q$.\\
Moreover, let $\p^1$ (resp. $\p^2$) denote the solution to {\bf
problem (P)} associated with the vector of mortality rates $\z^1$
(resp. $\z^2$). If $\z^1 \le \z^2$ then $\p^2 \le \p^1$.
\end{theorem}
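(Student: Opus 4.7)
My plan is to prove both statements by testing the weak formulation against a sign-sensitive truncation of the solution and closing with Gronwall's lemma. The algebraic sign analysis rests on two structural facts: (i) $\M$ is a Metzler matrix, i.e.\ $M_{ij}\ge 0$ for $i\ne j$, combined with the pointwise identity $p_i^+ p_i^-=0$; and (ii) the recruitment map $P\mapsto \beta_i(t,l,P)$ defined in (\ref{eqn:e6}) is nondecreasing and Lipschitz with constant $\psi_i(t)\,\theta_i^{-1}e^{\lambda t}$.

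For positivity I would test (\ref{P2}) with $\q=-\p^-\mathbf{1}_{(0,\tau)}(t)$, for arbitrary $\tau\in(0,T)$. Using the chain rule $\langle D\p,-\p^-\rangle=\langle D\p^-,\p^-\rangle$ (see obstacle below), Lemma \ref{lemma:trace} applied on $(0,\tau)\times(0,A)$, and $\p^0\ge 0$, the transport term equals
\[
\tfrac{1}{2}\!\int_0^A\!\!\int_0^L|\p^-(\tau,a,l)|^2\,dadl
+\tfrac{1}{2}\!\int_0^\tau\!\!\int_0^L|\p^-(t,A,l)|^2\,dtdl
-\tfrac{1}{2}\!\int_0^\tau\!\!\int_0^L|\p^-(t,0,l)|^2\,dtdl.
\]
A direct termwise computation (using $p_ip_i^-=-(p_i^-)^2$ and $\partial_l p_i\,\partial_l p_i^-=-(\partial_l p_i^-)^2$) shows $b_i(p_i,-p_i^-)=b_i(-p_i^-,-p_i^-)$, while the coupling satisfies $\sum_i c_i(\p,-p_i^-)-\sum_i c_i(-\p^-,-p_i^-)=\int_0^L\!\sum_{i\ne j}M_{ij}p_j^+ p_i^-\,dl\ge 0$ by (i). Hence $e(\p,-\p^-)\ge e(\p^-,\p^-)\ge C_2\|\p^-\|_{\H1}^2$ by Lemma \ref{lemmacoercive}. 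The only sign-indefinite boundary term is at $a=0$ and is bounded using (\ref{eqn:e6}) by $|\beta_i^-(t,l,P_i)|\le \fc_{[0,L_b]}(l)\psi_i(t)\theta_i^{-1}e^{\lambda t}P_i^-(t)$ together with $P_i^-(t)\le\int_0^A\!\!\int_{L_m}^L w_i p_i^-\,dlda$. Squaring, Cauchy-Schwarz, and dropping the nonnegative trace at $a=A$ yield $\int\!\!\int|\p^-(\tau)|^2\,dadl\le K\int_0^\tau\!\!\int\!\!\int |\p^-|^2\,dadldt$ with $K$ depending only on the data, and Gronwall's lemma forces $\p^-\equiv 0$.

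For the comparison, set $\r=\p^2-\p^1$ and let $e^{(i)}$ be the bilinear form (\ref{eqn:e}) with $\z=\z^i$. Subtracting the two weak equations, $\r$ satisfies, for every $\q\in V$,
\[
\int_\O\langle D\r,\q\rangle\,dtda+\int_\O e^{(2)}(\r,\q)\,dtda
+\sum_{i=1}^N\int_\O\!\!\int_0^L(z_i^2-z_i^1)p_i^1 q_i\,dl\,dtda=0,
\]
with $\r(0,a,l)=0$ and $\r(t,0,l)=\beta(t,l,{\bf P}^2(t))-\beta(t,l,{\bf P}^1(t))$. I would test with $\q=\r^+\mathbf{1}_{(0,\tau)}$; the same sign algebra (with positive and negative parts interchanged) gives $e^{(2)}(\r,\r^+)\ge C_2\|\r^+\|_{\H1}^2$. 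The crucial new ingredient is that by the positivity just established, $p_i^1\ge 0$, so $(z_i^2-z_i^1)p_i^1 r_i^+\ge 0$ and the extra mortality integral can be discarded. At $a=0$, property (ii) gives $(\beta_i(P_i^2)-\beta_i(P_i^1))^+\le \psi_i\theta_i^{-1}e^{\lambda t}(P_i^2-P_i^1)^+$, while $(P_i^2-P_i^1)^+\le\int_0^A\!\!\int_{L_m}^L w_i r_i^+\,dlda$ because $r_i=r_i^+-r_i^-$ with $r_i^-,w_i\ge 0$. The same Gronwall estimate then yields $\r^+\equiv 0$, i.e.\ $\p^2\le\p^1$.

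The main technical obstacle is the chain-rule identity $\langle D\p,-\p^-\rangle=\langle D\p^-,\p^-\rangle$ (and its analogue for $\r^+$) at the level of $V'$--$V$ duality, since $D=\partial_t+\partial_a$ is defined only distributionally in ${\mathcal D}'(\O,(\H1)')$ while the positive/negative-part map is merely Lipschitz. I would handle this by density: approximate $\p$ in $\{\q\in V : D\q\in V'\}$ by smooth functions for which the identity is classical, then pass to the limit using the strong/weak trace continuity asserted in Lemma \ref{lemma:trace}. Modulo this technical point, the proof reduces to the sign algebra above and one standard Gronwall step.
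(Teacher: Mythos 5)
Your argument is correct in outline, but it follows a genuinely different route from the paper's. The paper never tests the nonlinear problem directly: it establishes positivity for the frozen-recruitment map $\mathcal{F}$ of Lemma \ref{lemma3}, where the inflow datum at $a=0$ depends on the given $\hat{\p}$ rather than on the unknown, so both traces $\p^-(0,a,l)$ and $\p^-(t,0,l)$ vanish, the sign-indefinite boundary term you must estimate simply disappears, and coercivity alone gives $\p^-=0$ with no Gronwall step; positivity of the true solution then follows by iterating $\mathcal{F}$ and invoking Banach's fixed point theorem. The comparison is likewise obtained on the iterates, by an interleaving induction combining monotonicity of $\mathcal{F}$ in its argument with monotonicity in $\z$ for a fixed argument (via the decomposition $z_i^2p_i^2-z_i^1p_i^1=z_i^1(p_i^2-p_i^1)+p_i^2(z_i^2-z_i^1)$, the mirror image of yours, using $\p^2\ge 0$ where you use $\p^1\ge 0$). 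You instead work directly with the fixed point, so the nonlinear recruitment trace at $a=0$ survives and must be absorbed through the Lipschitz bound on $\beta_i$ and a Gronwall inequality in $t$; this is why you need the refined truncation identity on $(0,\tau)\times(0,A)$ retaining all four trace terms, a stronger statement than the inequality $\int_\O\langle D\p,\p^-\rangle\,dtda\le 0$ that the paper borrows from \cite{Langlais:1985}. Both roads work: the paper's is shorter because it recycles the contraction already built for existence and only ever needs the truncation lemma with vanishing inflow data, while yours is more self-contained at the PDE level and does not rely on convergence of the iterates, at the price of the chain-rule/trace identity for $\p^-$ that you correctly flag as the main obstacle --- an identity of exactly the type established in \cite{Langlais:1985} and \cite{Garroni:1982}, so this is a standard technical point rather than a genuine gap. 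Your sign bookkeeping ($e(\p,-\p^-)\ge e(\p^-,\p^-)$ from the Metzler structure of $\M$, and the bound $(\beta_i(t,l,P_i^2)-\beta_i(t,l,P_i^1))^+\le\psi_i\theta_i^{-1}e^{\lambda t}(P_i^2-P_i^1)^+$) is consistent with the computations in Lemmas \ref{lemma2}, \ref{lemma4} and \ref{lemma5}.
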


\section{Existence and uniqueness}
\label{section:exist} The proof of existence and uniqueness
consists in two main steps. First we show the result in the case
of a constant recruitment (independent of the fish density).
Second a fixed point argument enables to cope with the original
nonlinear recruitment.
\begin{lemma}
\label{lemma1} Let $\b$ be fixed in $L^2((0,T)\times(0,L))^N$.
There exists a unique $\p$ satisfying (\ref{P1})-(\ref{P3}) of
{\bf problem (P)} in which the initial condition (\ref{P4}) is
replaced by $\p(t,0,l)=\b(t,l)\ {\mathrm{a.e}}\ {\mathrm{in}}\
(0,T)\times(0,L)$.
\end{lemma}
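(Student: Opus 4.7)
With $\b$ fixed the equation is linear in $\p$, so the natural strategy is Galerkin approximation in the length variable $l$ combined with the coercivity of $e(\cdot,\cdot)$ proved in Lemma \ref{lemmacoercive}; the characteristic operator $D=\partial_t+\partial_a$ plays the role of a time derivative in the Gelfand triple $\H1\subset\HH\subset(\H1)'$, and the inhomogeneous data on the two faces $\{t=0\}$ and $\{a=0\}$ will be handled by lifting.

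First I would construct a lift $\tilde{\p}\in V$ with $D\tilde{\p}\in V'$ and prescribed traces $\tilde{\p}(0,a,l)=\p^0(a,l)$ and $\tilde{\p}(t,0,l)=\b(t,l)$. A natural construction is to propagate the data along the characteristics of $D$: set $\tilde{\p}(t,a,l)=\p^0(a-t,l)$ in $\{a>t\}$ and $\tilde{\p}(t,a,l)=\b(t-a,l)$ in $\{a<t\}$, and then regularize in $l$ (for instance by composing with the semigroup of the Laplacian on $(0,L)$ with Neumann boundary values) so that the result belongs to $V$. Setting $\w=\p-\tilde{\p}$ reduces the problem to finding $\w\in V$ with $D\w\in V'$, $\w(0,a,l)=0$, $\w(t,0,l)=0$, and
\begin{equation*}
\int_\O <D\w,\q>dtda+\int_\O e(\w,\q)dtda = -\int_\O <D\tilde{\p},\q>dtda-\int_\O e(\tilde{\p},\q)dtda,\quad \forall\q\in V.
\end{equation*}

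Next I would apply Galerkin. Pick a Hilbert basis $(\bphi_k)_{k\ge1}$ of $\H1$ (for instance the Neumann eigenfunctions of $-\partial_{ll}$ on $(0,L)$, consistent with (\ref{eqn:e1-4})), and seek $\w_n(t,a,l)=\sum_{k=1}^n c_k^n(t,a)\bphi_k(l)$. Projecting the weak formulation onto $\mathrm{span}(\bphi_1,\ldots,\bphi_n)$ yields a first-order linear hyperbolic system in $(t,a)\in\O$ for the coefficient vector $(c_1^n,\ldots,c_n^n)$ with zero data on $\{t=0\}\cup\{a=0\}$, uniquely solvable by integration along the characteristics $a-t=\mathrm{const}$. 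Testing the projected equation against $\w_n$ itself and using the integration-by-parts formula of Lemma \ref{lemma:trace} (whose boundary contributions at $\{t=T\}$ and $\{a=A\}$ are manifestly nonnegative, while those at $\{t=0\}$ and $\{a=0\}$ vanish by construction) together with the coercivity inequality (\ref{coercive}) produces a bound on $\w_n$ in $V$ uniform in $n$; the equation itself then yields a bound on $D\w_n$ in $V'$. Extracting a weakly convergent subsequence, passing to the limit in the linear equation, and invoking the weak continuity of the trace maps from Lemma \ref{lemma:trace} to preserve $\w(0,\cdot,\cdot)=0$ and $\w(\cdot,0,\cdot)=0$ yields the desired $\p=\w+\tilde{\p}$.

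Uniqueness is immediate: the difference $\p$ of two solutions satisfies the homogeneous problem, and Lemma \ref{lemma:trace} with $\q=\p$ gives $2\int_\O <D\p,\p>dtda=\int_0^A\int_0^L|\p(T,a,l)|^2\,da\,dl+\int_0^T\int_0^L|\p(t,A,l)|^2\,dt\,dl\ge 0$, whence the weak formulation combined with (\ref{coercive}) forces $\p\equiv 0$. The main technical obstacle is the lifting step: the prescribed traces live only in $L^2$ while $V$-functions are $\H1$-valued in $l$, so the naive transport-along-characteristics formula is not in $V$ as is, and the regularization in $l$ has to be performed carefully so that both $\tilde{\p}$ and $D\tilde{\p}$ acquire the required integrability. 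Once the lift is in hand, the remainder is a standard application of coercive Galerkin machinery.
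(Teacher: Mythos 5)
You take a genuinely different route from the paper, and most of it is sound. The paper's proof splits into (i) an abstract solvability step for the problem with homogeneous data on $\{t=0\}\cup\{a=0\}$: the operator $A\p=D\p$ on its natural domain is shown to be maximal monotone (via the contraction semigroup generated by $-(\partial_t+\partial_a)$, following Bardos), and $A+E$, with $E$ the bounded coercive operator attached to $e(\cdot,\cdot)$, is then surjective onto $V'$; and (ii) a treatment of the inhomogeneous data by choosing smooth $\bphi^n$ whose traces converge to $\p^0$ and $\b$ in $L^2$, solving for $\q^n$ with right-hand side $-A\bphi^n-E\bphi^n$, and extracting a weak limit of $\p^n=\q^n+\bphi^n$ from the uniform $V$-bound given by coercivity and the weak continuity of the trace maps. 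Your Galerkin scheme is a perfectly viable substitute for step (i): the projected system is an ODE system along the characteristics $a-t=\mathrm{const}$ with bounded coefficients, and your energy estimate is the same one the paper uses. The only care needed there is the usual Galerkin wrinkle: the projected equation controls $D\w_n$ only against test functions in the span, so $D\w\in V'$ and the vanishing of the traces of $\w$ should be recovered from the limit equation (testing against functions not vanishing at $t=0$, $a=0$) rather than from a uniform $V'$-bound on $D\w_n$. Your uniqueness argument is correct as stated.

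The one genuine gap is the lifting step, which is precisely the point you flag without resolving, and your reduction to homogeneous data does not go through as written. A fixed regularization in $l$ (mollification, or $e^{\epsilon\Delta}$ with Neumann conditions) does place the transported data in $V$, but it replaces the traces at $t=0$ and $a=0$ by their regularized versions, so $\p=\w+\tilde{\p}$ solves a problem with the wrong data; a regularization whose strength vanishes as one approaches the faces $\{t=0\}$ and $\{a=0\}$ can be made to work, but it requires a quantitative smoothing estimate to keep $\tilde{\p}\in V$ and $D\tilde{\p}\in V'$ and a nontrivial identification of the limiting trace for data that are merely $L^2$, none of which is supplied. The clean repair is the paper's: do not build a single lift, but approximate $(\p^0,\b)$ by traces of smooth $\bphi^n$ (for which the lift is trivial), solve the homogeneous-data problem for each $n$ by your Galerkin step, note that the energy estimate bounds $\p^n$ in $V$ uniformly in terms of the $L^2$ norms of the data alone, and pass to the weak limit, recovering the prescribed traces from the continuity of the trace maps of Lemma \ref{lemma:trace}.
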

\begin{proof}
The proof is an adaptation of the results given for the scalar
case in \cite{Langlais:1985}. We sketch it for the sake of
completeness. It consists in two steps.\\


{\bf Step 1}: We prove that given $\h \in V'$ there exists a
unique $\p \in V$, $D\p \in V'$ such that
\begin{equation}
\ds \int_\O <D\p,\q>dtda+\int_\O
e(\p,\q)dtda=\int_\O<\h,\q>dtda,\quad \forall \q \in V
\end{equation}
and $\p(0,a,l)=\p(t,0,l)=0$.\\

Let $A^0$ be the unbounded linear operator on
$(L^2(\Q))^N$ with domain $D(A^0)=\lbrace \p \in (L^2(\Q))^N,\
\partial_t \p + \partial_a \p \in (L^2(\Q))^N,
\p(0,t,l)=\p(t,0,l)=0 \rbrace$, defined by $\p \in D(A^0),\
A^0\p=\partial_t \p + \partial_a \p$. Then $-A^0$ is the
infinitesimal generator of a contraction semigroup, $(S(\tau)\p,\
\tau \ge 0)$, in $(L^2(\Q))^N$ (see \cite{Bardos:1970}) and
$$
(S(\tau)\p)(t,a,l)= \left \lbrace \begin{array}{l}
\p(t-\tau,a-\tau,l) \quad {\mathrm{if}}\ (t-\tau,a-\tau,l) \in \Q,\\
0 \quad {\mathrm{otherwise}}.\\
\end{array}
\right.
$$
From this one can deduce that the unbounded linear operator $A$
from $V$ to $V'$ with domain $D(A)=\lbrace \p \in V,\ D\p \in V',\
\p(0,a,l)=\p(t,0,l)=0 \rbrace$,
defined by $A\p = D\p$ is a maximal monotone operator.\\
With the bilinear form $e(.,.)$ we can define a linear bounded and
coercive operator $E$ from $V$ to $V'$ such that $<<E\p,\q>>=\ds
\int_\O e(\p,\q)dtda,\quad \forall \p,\q \in V$. Since $E$ is
bounded and coercive and $A$ is maximal monotone we conclude that
for any $\h \in V'$ there exists a unique $\p \in D(A)$ solution
to $A\p+E\p=\h$ which is an abstract formulation of our problem
because\\
$<<A\p,\q>>=\ds \int_\O <D\p,\q>dtda,\quad \forall \p \in D(A),\ \forall \q \in V$.\\

{\bf Step 2}: Let us now introduce a sequence of functions
$\bphi^n \in (C^\infty(\overline{\Q}))^N$ such that
$$
\begin{array}{l}
\bphi^n(0,a,l) \rightarrow \p^0(a,l)\quad {\mathrm{in}}\ (L^2((0,A)\times(0,L)))^N,\\
\bphi^n(t,0,l) \rightarrow \b(t,l)\quad {\mathrm{in}}\ (L^2((0,T)\times(0,L)))^N,\\
\end{array}
$$
From {\bf step 1}, we conclude that there exists a unique $\q^n$
in $D(A)$ solution to $A\q^n+E\q^n=-A\bphi^n-E\bphi^n$. Therefore
$\p^n=\q^n+\bphi^n$ is a solution to (\ref{P2}) satisfying
$\p^n(0,a,l)=\bphi^n(0,a,l)$ and
$\p^n(t,0,l)=\bphi^n(t,0,l)$.\\
Now taking $\p^n$ as a test function in (\ref{P2}), integrating by
parts using Lemma \ref{lemma:trace} and using the coercivity of
$e(.,.)$ we obtain that
$$
C_2 ||\p^n||^2_V \le \ds
\frac{1}{2}||\bphi^n(0,a,l)||^2_{(L^2((0,A)\times(0,L)))^N} +
\frac{1}{2}||\bphi^n(t,0,l)||^2_{(L^2((0,T)\times(0,L)))^N}.
$$
By the choice of $\bphi^n$ this implies that $\p^n$ is a bounded
sequence in $V$. Therefore we can extract a subsequence still
denoted $\p^n$ such that $\p^n \rightarrow \p$ weakly in $V$ and
$D\p^n \rightarrow \r$ weakly in $V'$. Since the operator $D$ is
continuous on ${\mathcal{D}}'(\O,(\H1)')$, $\r = D\p$. Moreover
since $E$ is continuous $E\p^n \rightarrow E\p$. We conclude that
$\p$ satisfies (\ref{P2}). The continuity of the trace
applications on $t=0$ and $a=0$ implies that $\p(0,a,l)=\p^0(a,l)$
and $\p(t,0,l)=\b(t,l)$.
\end{proof}

\begin{lemma}
\label{lemma2} Let
$C_3=(\underset{i\in[1:N]}{max}[AL^2||\bpsi||_\infty^2||\w||_\infty^2(\ds
\frac{e^{\lambda T}}{\theta_i})^2])^{1/2}$,
then the application\\
$(p_i(t,a,l)) \mapsto (\beta_i (t,l,P_i(t)))$ (cf Eqs \ref{eqn:e5}
and \ref{eqn:e6}) defines a bounded nonlinear operator, lipschitz
continuous from $L^2(\O,\HH)$ to $(L^2((0,T)\times(0,L))^N$ with
lipschitz constant $C_3$.
\end{lemma}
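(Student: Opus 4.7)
The target map factors as $\p \mapsto {\bf P} \mapsto \bbeta$, where the first arrow $(p_i(t,a,l)) \mapsto (P_i(t))$ is linear in $\p$ and the second is a pointwise-in-$(t,l)$ Nemytskii type operator involving the scalar nonlinearity $g_\theta(x):= x/(\theta+|x|)$. The strategy is to bound each step separately and compose.

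\medskip

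\noindent\textbf{Step 1: Lipschitz property of the scalar nonlinearity.} The key analytic ingredient is the estimate
\begin{equation*}
|g_\theta(x)-g_\theta(y)| \le \frac{1}{\theta}|x-y|, \qquad \forall x,y\in\mathbb{R}, \ \theta>0.
\end{equation*}
I would prove this by noting that $g_\theta$ is continuous on $\mathbb{R}$ and differentiable on $\mathbb{R}\setminus\{0\}$, with
$g_\theta'(x)=\theta/(\theta+|x|)^2$, so $|g_\theta'(x)|\le 1/\theta$ away from $0$; the mean value theorem applied separately on the half-lines $(-\infty,0]$ and $[0,+\infty)$, combined with continuity at $0$, yields the global Lipschitz bound. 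Applied with $\theta = \theta_i e^{-\lambda t}$, this gives the pointwise bound $1/(\theta_i e^{-\lambda t}) \le e^{\lambda T}/\theta_i$ for $t\in(0,T)$.

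\medskip

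\noindent\textbf{Step 2: Controlling $P_i(t)-Q_i(t)$ by Cauchy--Schwarz.} For $\p,\q\in L^2(\O,\HH)$, I apply Cauchy--Schwarz to the definition (\ref{eqn:e5}) to obtain, for almost every $t\in(0,T)$,
\begin{equation*}
|P_i(t)-Q_i(t)|^2 \le \|w_i\|_\infty^2\, A\, L \int_0^A\!\int_0^L |p_i-q_i|^2(t,a,l)\,dl\,da,
\end{equation*}
after bounding $w_i$ in $L^\infty$ and using that the domain of integration $(0,A)\times(L_m,L)$ has measure at most $AL$. Integrating over $t\in(0,T)$ yields
\begin{equation*}
\int_0^T |P_i(t)-Q_i(t)|^2\,dt \le \|w_i\|_\infty^2\, A\, L\, \|p_i-q_i\|_{L^2(\O,L^2(0,L))}^2.
\end{equation*}

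\medskip

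\noindent\textbf{Step 3: Assembling the Lipschitz estimate.} Combining Steps 1 and 2, and using $|\fc_{[0,L_b]}(l)|\le 1$ on $(0,L)$ together with $|\psi_i(t)|\le\|\bpsi\|_\infty$, I estimate component-wise:
\begin{equation*}
\int_0^T\!\int_0^L |\beta_i(t,l,P_i)-\beta_i(t,l,Q_i)|^2\,dl\,dt \le L\,\|\bpsi\|_\infty^2\,\bigl(\tfrac{e^{\lambda T}}{\theta_i}\bigr)^2 \int_0^T |P_i-Q_i|^2\,dt,
\end{equation*}
and then substitute the bound from Step 2. Summing over $i\in[1:N]$ and recognizing the constant $C_3^2 = \max_i\bigl[AL^2\|\bpsi\|_\infty^2\|\w\|_\infty^2(e^{\lambda T}/\theta_i)^2\bigr]$ gives the Lipschitz bound with constant $C_3$ from $L^2(\O,\HH)$ to $(L^2((0,T)\times(0,L)))^N$.

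\medskip

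\noindent\textbf{Step 4: Boundedness.} Since $|g_\theta(x)|\le 1$ for all $x,\theta>0$, one has $|\beta_i(t,l,P_i)|\le \|\bpsi\|_\infty \fc_{[0,L_b]}(l)$, so the operator sends all of $L^2(\O,\HH)$ into a bounded set of $(L^2((0,T)\times(0,L)))^N$ of radius at most $(NTL_b)^{1/2}\|\bpsi\|_\infty$.

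\medskip

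\noindent The only non-routine point is Step 1, because of the absolute value in the denominator of (\ref{eqn:e6}); once the global Lipschitz constant $1/\theta$ for $g_\theta$ is established, the remainder is a direct Cauchy--Schwarz computation.
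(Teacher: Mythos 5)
Your proof is correct and follows essentially the same route as the paper's: factor the map through $P_i$, bound the linear step $p_i \mapsto P_i$ by Cauchy--Schwarz with the constant $\|\w\|_\infty^2 A L$, and use the uniform-in-$t$ Lipschitz constant $e^{\lambda T}/\theta_i$ of the scalar nonlinearity to assemble exactly the constant $C_3$. The only difference is that you supply the (correct) elementary verification of the Lipschitz bound for $x \mapsto x/(\theta+|x|)$, which the paper merely asserts, and you get boundedness from the uniform bound $|g_\theta|\le 1$ rather than the paper's linear-growth bound --- both are fine.
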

\begin{proof}
The application, $p_i(t,a,l)\mapsto P_i(t)=\ds \int_0^A
\int_{L_m}^Lw_i(t,a,l)p_i(t,a,l)da dl$, defines a bounded linear
operator from $L^2(\Q)$ to $L^2(0,T)$. This follows from,
$$
|\ds \int_0^A \int_{L_m}^Lw_i(t,a,l)p_i(t,a,l)da dl| \le \ds
\int_0^A \int_{0}^L|w_i(t,a,l)p_i(t,a,l)|da dl,
$$
and using Cauchy-Schwarz yields
$$
\ds \int_0^T|P_i(t)|^2dt \le ||\w||^2_\infty A L
||p_i||^2_{L^2(\Q)}.
$$
The application $p_i(t,a,l) \mapsto \beta_i (t,l,P_i(t))$ defines
a bounded nonlinear operator from $L^2(\Q)$ to
$L^2((0,T)\times(0,L))$. This follows from the fact that the
application $u_i(t,P)=\ds \frac{P}{\theta_i e^{-\lambda t} + |P|}$
from $[0,T]\times\mathbb{R}$ to $\mathbb{R}$ satisfies $|u_i(t,P)|
\le \ds \frac{e^{\lambda T}}{\theta_i}|P|$ and therefore we have
$$
\ds \int_0^T \int_0^L (\beta_i(t,l,P_i(t)))^2dtdl \le
||\bpsi||^2_\infty (\ds \frac{e^{\lambda T}}{\theta_i})^2
||\w||_\infty^2 A L^2 ||p_i||^2_{L^2(\Q)}.
$$
Lipschitz continuity follows from the fact that $(t,P)\mapsto
u_i(t,P)$ is lipschitz continuous in $P$ uniformly in $t \in
[0,T]$,
$$
|u_i(t,P^1) - u_i(t,P^2)| \le \ds \frac{e^{\lambda T}}{\theta_i}
|P^1 -P^2|,\quad \forall P^1,P^2 \in \mathbb{R},\ \forall t \in
[0,T].
$$
Hence, if to $p_i^1$ (resp. $p_i^2$) we associate $P_i^1$ (resp.
$P_i^2$) it holds that
$$
\begin{array}{l}
\ds \int_0^T \int_0^L [\beta_i(t,l,P_i^1(t))-\beta_i(t,l,P_i^2(t))]^2dtdl\\[7pt]
=\ds \int_0^T \int_0^L [\fc_{[0,L_b]}(l)\psi_i(t)(u_i(t,P_i^1(t))-u_i(t,P_i^2(t)))]^2dtdl,\\[7pt]
\le L||\bpsi||_\infty^2(\ds \frac{e^{\lambda T}}{\theta_i})^2 \ds \int_0^T |P_i^1(t)-P_i^2(t)|^2dt,\\[7pt]
\le AL^2||\bpsi||_\infty^2||\w||_\infty^2(\ds \frac{e^{\lambda
T}}{\theta_i})^2 ||p_i^1-p_i^2||_{L^2(\Q)}^2.
\end{array}
$$
\end{proof}

\begin{lemma}
\label{lemma3} There exists a unique $\p$ satisfying {\bf problem
(P)}
\end{lemma}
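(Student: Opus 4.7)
The plan is to establish existence and uniqueness by a Banach fixed point argument that couples the linear solvability result of Lemma \ref{lemma1} with the Lipschitz estimate of Lemma \ref{lemma2}. Given any $\tilde\p \in X := L^2(\O, \HH)$, one first computes the spawning biomass $\tilde{\bf P}(t)$ via (\ref{eqn:e5}) and the associated recruitment trace $\b(t,l) = \bbeta(t,l, \tilde{\bf P}(t))$ via (\ref{eqn:e6}); Lemma \ref{lemma2} guarantees that $\b \in (L^2((0,T)\times(0,L)))^N$. Lemma \ref{lemma1} then yields a unique $\p \in V$ with $D\p \in V'$ solving the linear problem with initial datum $\p^0$ and boundary datum $\b$. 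Setting $T\tilde\p := \p$ and noting that $V$ embeds continuously in $X$, the operator $T$ maps $X$ into itself, and any fixed point of $T$ is a weak solution of problem (P).

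The second step is to produce a Lipschitz bound for $T$. For $\tilde\p^1, \tilde\p^2 \in X$ the difference $\p = T\tilde\p^1 - T\tilde\p^2$ solves the linear problem with vanishing initial datum and boundary datum $\delta\b = \bbeta(\cdot, \tilde{\bf P}^1) - \bbeta(\cdot, \tilde{\bf P}^2)$. Testing the variational equation with $\p$ itself, applying Lemma \ref{lemma:trace} to integrate by parts the $D\p$ term and invoking the coercivity from Lemma \ref{lemmacoercive}, I would obtain an energy identity of the form
$$
C_2 \| \p \|_V^2 + \tfrac{1}{2} \| \p(T,\cdot,\cdot)\|^2_{(L^2((0,A) \times (0,L)))^N} + \tfrac{1}{2} \int_0^T \| \p(t,A,\cdot) \|_\HH^2 \, dt \;\le\; \tfrac{1}{2} \| \delta\b \|^2_{(L^2((0,T) \times (0,L)))^N},
$$
whose right-hand side is in turn controlled by Lemma \ref{lemma2} in terms of $C_3^2 \| \tilde\p^1 - \tilde\p^2 \|_X^2$.

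The main obstacle is that the ratio $C_3^2/(2C_2)$ is in general not less than one, and inspection of the proof of Lemma \ref{lemma2} shows that the Lipschitz constant does not shrink as the interval of integration is reduced, so the estimate above does not yield a contraction in the natural norm of $X$. To overcome this I would rerun the same energy argument on each sub-cylinder $(0,s) \times (0,A)$ in order to extract the pointwise-in-time inequality
$$
\| \p(s,\cdot,\cdot) \|_{(L^2((0,A) \times (0,L)))^N}^2 \;\le\; C_3^2 \int_0^s \| \tilde\p^1(\tau,\cdot,\cdot) - \tilde\p^2(\tau,\cdot,\cdot) \|_{(L^2((0,A) \times (0,L)))^N}^2 \, d\tau ,
$$
and then introduce on $X$ the equivalent weighted norm
$$
\| \p \|_\rho^2 \;=\; \int_0^T e^{-\rho t} \| \p(t,\cdot,\cdot) \|_{(L^2((0,A) \times (0,L)))^N}^2 \, dt .
$$
Multiplying the pointwise estimate by $e^{-\rho s}$, integrating over $s \in (0,T)$ and applying Fubini produces $\| T\tilde\p^1 - T\tilde\p^2 \|_\rho^2 \le (C_3^2/\rho)\,\| \tilde\p^1 - \tilde\p^2 \|_\rho^2$. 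Choosing $\rho > C_3^2$ renders $T$ a strict contraction on the Banach space $(X, \|\cdot\|_\rho)$, and the Banach fixed-point theorem then delivers the unique weak solution $\p$ to problem (P).
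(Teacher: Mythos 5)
Your proof is correct, and it reaches the contraction by a genuinely different device than the paper. Both arguments set up the same map (solve the linear problem of Lemma \ref{lemma1} with boundary datum $\bbeta(\cdot,\cdot,\tilde{\bf P})$ supplied by Lemma \ref{lemma2}) and both run the same energy estimate on the difference of two iterates; the divergence is in how the non-small Lipschitz constant $C_3$ is beaten. The paper goes back to the exponential change of unknown $e^{-\lambda t}\p$ and splits $\lambda=\lambda_1+\lambda_2$, using $\lambda_1$ for coercivity and the surplus $\lambda_2\|\p\|_{\HH}^2$ in $e(\p,\p)$ to absorb the boundary term, then picks $\lambda_2$ comparable to $C_3$; you instead keep $\lambda$ fixed and replace the norm on $L^2(\O,\HH)$ by the Bielecki norm $\|\cdot\|_\rho$, deriving first the pointwise-in-time inequality on the sub-cylinder $(0,s)\times(0,A)$ and then the factor $C_3^2/\rho$. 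The two tricks are morally the same (weighting in $t$ versus conjugating by $e^{\lambda t}$), but yours is cleaner on one delicate point: in the paper $C_3$ itself depends on $\lambda$ through the factor $e^{\lambda T}/\theta_i$, so the prescription ``choose $\lambda_2=C_3$'' is an implicit equation of the form $\lambda_2 = c\,e^{\lambda_2 T}$ that need not be solvable for large $T$, whereas your $\rho$ is chosen after $\lambda$ and $C_3$ are frozen, and $\|\cdot\|_\rho$ is equivalent to the original norm for every $\rho$, so the argument is uniform in $T$. The only step you should make explicit is the justification of the energy identity on the truncated domain $(0,s)\times(0,A)$: Lemma \ref{lemma:trace} is stated on all of $\O$, so you need either to test with $\p\,\fc_{(0,s)}(t)$ or to note that the trace and integration-by-parts statements localize to sub-cylinders; this is routine but worth a sentence.
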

\begin{proof}
Let $\hat{\p}$ be given in $V$. With $\hat{\p}$ we associate a
vector $(\hat{P}_i(t))$. Let us denote $\mathcal{F}\hat{\p}=\p$
the solution to (\ref{P1})-(\ref{P3}) and satisfying
$(p_i(t,0,l))=(\beta_i(t,l,\hat{P}_i(t))$.\\
From Lemma \ref{lemma1} and Lemma \ref{lemma2} we deduce that the
nonlinear operator $\mathcal{F}$ maps $V$ into itself. Moreover it
follows from Lemma \ref{lemma:trace} that
$$
\ds \int_\O <D\p,\p> dtda \ge  - \ds \frac{1}{2} \int_0^A
||\p^0(a,.)||_{\HH}^2da - \ds \frac{1}{2} \int_0^T
||\p(t,0,.)||^2_{\HH}dt
$$
The coercivity of $e(.,.)$ leads to
$$
C_2 \ds \int_\O ||\p(t,a,.)||_{\H1}^2 dtda \le \ds \frac{1}{2}
\int_0^A ||\p^0(a,.)||_{\HH}^2da + \ds \frac{1}{2} \int_0^T
||\p(t,0,.)||^2_{\HH}dt.
$$
Lemma \ref{lemma2} then gives
$$
C_2 \ds \int_\O ||\p(t,a,.)||_{\H1}^2 dtda \le \ds \frac{1}{2}
\int_0^A ||\p^0(a,.)||_{\HH}^2da + \ds \frac{1}{2} C_3
||\hat{\p}||^2_{L^2(\O,\HH)},
$$
and $\mathcal{F}$ is bounded from $L^2(\O,\HH)$ to $V$.\\
The solutions we are looking for are the fixed points of
$\mathcal{F}$.
Let us show that $\mathcal{F}$ is a strict contraction in $L^2(\O,\HH)$.\\
Let $\hat{\p}^1$ and $\hat{\p}^2$ be given in $L^2(\O,\HH)$ and
let $\p^1=\mathcal{F}\hat{\p}^1$ and $\p^2=\mathcal{F}\hat{\p}^2$
be the associated solutions. The difference
$\p=\mathcal{F}\hat{\p}^1-\mathcal{F}\hat{\p}^2$ satisfies
(\ref{P1}),(\ref{P2}), $\p(0,a,l)=0$ and
$(p_i(t,0,l))=(\beta_i(t,l,\hat{P}_i^1(t))-\beta_i(t,l,\hat{P}_i^2(t)))$.\\
At the end of the proof of Lemma \ref{lemmacoercive}, since
$\lambda$ is arbitrary, one can choose $\lambda=\lambda_1 +
\lambda_2$ with $\lambda_1 >\ds\frac{1}{2d^0} ||\gamma||^2_\infty
+||\partial_l \gamma||^2_\infty +N ||\M||_\infty$ and $\lambda_2 >
0$ arbitrary. Hence,
$$
e(\p,\p)\ge \tilde{C_2}||\p||^2_{\H1} + \lambda_2 ||\p||^2_{\HH}
\ge \lambda_2 ||\p||^2_{\HH},\quad \forall \p \in \H1.
$$
Now using Lemma \ref{lemma:trace} once again we obtain
$$
\lambda_2 \ds \int_\O ||\p(t,a,.)||^2_{\HH}dtda \le \ds
\frac{1}{2} \sum_{i=1}^N \ds \int_0^T \int_0^L
[\beta_i(t,l,\hat{P}_i^1(t))-\beta_i(t,l,\hat{P}_i^2(t))]^2dtdl
$$
and Lemma \ref{lemma2} gives
$$
\lambda_2 ||\p||^2_{L^2(\O,\HH)} \le \ds \frac{1}{2} C_3
||\hat{\p}^1-\hat{\p}^2||_{L^2(\O,\HH)}.
$$
We can choose $\lambda_2 = C_3$ and since
$\p=\mathcal{F}\hat{\p}^1-\mathcal{F}\hat{\p}^2$ this proves that
$\mathcal{F}$ is a strict contraction on $L^2(\O,\HH)$. Thus it
follows from Banach fixed point theorem that $\mathcal{F}$ admits
a unique fixed point $\p$ which is the desired solution.
\end{proof}

\section{Positivity and comparison result}
\label{section:positive} In this section we first show in Lemma
\ref{lemma4} that the fish density population solution to our
model is positive. Then a comparaison result is given in Lemma
\ref{lemma5}.

\begin{lemma}
\label{lemma4} The solution $\p$ to {\bf problem (P)} is
nonnegative a.e. in $\Q$.
\end{lemma}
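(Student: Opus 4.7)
The strategy is to reduce the claim to a positivity statement for the auxiliary linear problem of Lemma~\ref{lemma1} and then transfer that statement through the fixed-point iteration used in Lemma~\ref{lemma3}. The two key algebraic inputs are the cooperative structure of $\M$ (off-diagonal entries $m_{j\to i}\ge 0$) and the sign identity $\beta_i(t,l,P)\ge 0$ whenever $P\ge 0$, which is immediate from~(\ref{eqn:e6}).

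\smallskip
\noindent\emph{Step 1 (linear positivity).} I would first prove the auxiliary statement: if $\b\in(L^2((0,T)\times(0,L)))^N$ with $\b\ge 0$ a.e.\ and $\p^0\ge 0$ a.e., then the solution $\p$ of Lemma~\ref{lemma1} with boundary datum $\b$ at $a=0$ satisfies $\p\ge 0$ a.e.\ in $\Q$. The natural test function is $\q=-\p^-$, the componentwise negative part. Since $\p\in V$ implies $\p^-\in V$ and, after a standard mollification, $D\p^-\in V'$ with $Dp_i^-=-\chi_{\{p_i<0\}}Dp_i$, the trace formula of Lemma~\ref{lemma:trace} applied to the pair $(\p^-,\p^-)$ gives
$$\int_{\O}<D\p,-\p^->dtda=\tfrac12\!\int_0^A\!\!\int_0^L|\p^-(T,a,l)|^2dadl+\tfrac12\!\int_0^T\!\!\int_0^L|\p^-(t,A,l)|^2dtdl,$$
the traces at $t=0$ and $a=0$ vanishing because $\p^0\ge 0$ and $\b\ge 0$; both surviving terms are nonnegative. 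For the bilinear form, a direct calculation on $\{p_i<0\}$ yields $b_i(p_i,-p_i^-)=b_i(p_i^-,p_i^-)$, while splitting $\M\p=\M\p^+-\M\p^-$ and using $p_i^+p_i^-=0$ leads to
$$e(\p,-\p^-)=e(\p^-,\p^-)+\sum_{i\ne j}\int_0^L m_{j\to i}\,p_j^+\,p_i^-\,dl\ \ge\ C_2\|\p^-\|_{\H1}^2$$
pointwise in $(t,a)$, by the coercivity estimate of Lemma~\ref{lemmacoercive} and $m_{j\to i}\ge 0$. Feeding both displays into the variational identity of problem~(P) gives $C_2\|\p^-\|_V^2\le 0$, hence $\p^-\equiv 0$.

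\smallskip
\noindent\emph{Step 2 (iteration).} I would then apply Step~1 inside the contraction scheme of Lemma~\ref{lemma3}. Recall that the map $\mathcal{F}$ sending $\hat\p$ to the solution of the linear problem with boundary data $(\beta_i(t,l,\hat P_i(t)))$ is a strict contraction on $L^2(\O,\HH)$ whose unique fixed point is $\p$. Starting from $\p^{(0)}:=0$ and iterating $\p^{(n+1)}:=\mathcal{F}\p^{(n)}$, one has $\p^{(n)}\to\p$ in $L^2(\O,\HH)$. By induction: if $\p^{(n)}\ge 0$ a.e., then $P_i^{(n)}(t)\ge 0$ (as $w_i\ge 0$), so~(\ref{eqn:e6}) gives $\beta_i(t,l,P_i^{(n)}(t))\ge 0$, and Step~1 yields $\p^{(n+1)}\ge 0$. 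Extracting an a.e.\ convergent subsequence from the $L^2$-convergent iteration, $\p\ge 0$ a.e.\ in $\Q$.

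\smallskip
\noindent\emph{Main obstacle.} The one nonroutine point is justifying that $D\p^-\in V'$ and that Lemma~\ref{lemma:trace} extends to the pair $(\p^-,\p^-)$. This does not follow from $\p\in V$, $D\p\in V'$ alone and requires a regularization argument, for instance replacing $(\cdot)^-$ by a smooth convex approximant, deriving a uniform $V'$-bound on $D$ of the approximation, and passing to the limit. Everything else reduces to the algebra built on $p_i^+p_i^-=0$ and the off-diagonal nonnegativity of $\M$.
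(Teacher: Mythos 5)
Your proposal is correct and follows essentially the same route as the paper: test the linear problem (Lemma~\ref{lemma1}) with the negative part of the solution, use the vanishing of $b_i$ on the pair $(p_i^+,p_i^-)$ together with the nonnegativity of the off-diagonal entries of $\M$ and the trace formula of Lemma~\ref{lemma:trace} to conclude $C_2\|\p^-\|_V^2\le 0$, then propagate nonnegativity through the Banach iteration $\p^{(n+1)}=\mathcal{F}\p^{(n)}$ using $w_i\ge 0$ and $\beta_i(t,l,P)\ge 0$ for $P\ge 0$. The only cosmetic difference is your choice of test function $-\p^-$ versus the paper's $\p^-$, and your explicit flagging of the regularization needed to apply the trace lemma to $\p^-$, which the paper delegates to the reference \cite{Langlais:1985}.
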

\begin{proof}
As in the proof of Lemma \ref{lemma3}, let $\hat{\p}$ be given in
$V$ and let $\mathcal{F}\hat{\p}=\p$ denote the solution to
(\ref{P1})-(\ref{P3}) and satisfying
$(p_i(t,0,l))=(\beta_i(t,l,\hat{P}_i(t)))$.
Let us also assume that $\hat{\p} \ge 0$.\\
The negative parts of $p_i(0,a,l)$ and $p_i(t,0,l)$ satisfy
$(p_i(0,a,l))^{-} = (p_i^0(a,l))^{-}=0$
and $(p_i(t,0,l))^{-} = ( \beta_i (t,l,\hat{P}_i(t)) )^{-} =0$.\\
One can then show using Lemma \ref{lemma:trace} (see
\cite{Langlais:1985}) that
$\ds \int_\O <D\p,\p^->dtda \le 0$.\\
The bilinear form $e$ can be decomposed as
$e(\p,\p^-)=e(\p^+,\p^-)-e(\p^-,\p^-)$,
with\\
$e(\p^+,\p^-)=\ds \sum_{i=1}^N b_i(p_i^+,p_i^-)+c_i(\p^+,p_i^-)$.\\
It holds that $b_i(p_i^+,p_i^-)=0$ since one can check that
$b_i(p_i,p_i^-)=-b_i(p_i^-,p_i^-)$. Moreover,
$c_i(\p^+,p_i^-)=-\ds\int_0^L \ds \sum_{j=1}^N M_{ij} p_j^+ p_i^-
dl \le 0$,
since $M_{ij} p_j^+ p_i^- \ge 0$ for $i\ne j$ and $M_{ii} p_i^+ p_i^-=0$.\\
We conclude that $e(\p^+,\p^-) \le 0$.\\
Taking $\q = \p^-$ in Eq. \ref{P2} yields,
$$
\ds \int_\O <D\p,\p^-> dtda+\int_\O e(\p^+,\p^-)dtda - \int_\O
e(\p^-,\p^-)dtda=0
$$
so that we obtain $\int_\O e(\p^-,\p^-)dtda \le 0$. The coercivity
of $e$ gives,\\
$C_2 ||\p^-||_V \le 0$, that is to say $\p$ is nonnegative.\\
If we define a sequence with $\p^1=\hat{\p}$ and
$\p^{n+1}=\mathcal{F}\p^n$, then from the previous lines we deduce
that $\p^n$ is nonnegative for all $n \ge 1$. By Banach fixed
point theorem this sequence converges to the solution $\p$ which
is therefore nonnegative.
\end{proof}

\begin{lemma}
\label{lemma5} Let $\p^1$ (resp. $\p^2$) denote the solution to
{\bf problem (P)} associated with the vector of mortality rates
$\z^1$ (resp. $\z^2$). If $\z^1 \le \z^2$ then $\p^1 \ge \p^2$.
\end{lemma}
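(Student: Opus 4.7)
The plan is to run the fixed-point iteration of Lemmas \ref{lemma3} and \ref{lemma4} simultaneously for the two mortalities $\z^1$ and $\z^2$, and to show by induction that the iterates preserve the comparison $\p^{1,n}\ge\p^{2,n}\ge 0$ at every step. Denote by $\mathcal{F}^j$ the nonlinear operator built in Lemma \ref{lemma3} with mortality $\z^j$, $j=1,2$. Starting from the common initial guess $\p^{1,0}=\p^{2,0}\equiv 0$, set $\p^{j,n+1}=\mathcal{F}^j\p^{j,n}$. Each sequence converges strongly in $L^2(\O,\HH)$ to the fixed point $\p^j$ by Lemma \ref{lemma3}, and by the argument of Lemma \ref{lemma4} each iterate satisfies $\p^{j,n}\ge 0$ a.e. in $\Q$.

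For the induction step $\p^{1,n}\ge\p^{2,n}\Rightarrow\p^{1,n+1}\ge\p^{2,n+1}$, I first observe that for $P\ge 0$ the map $P\mapsto\beta_i(t,l,P)=\fc_{[0,L_b]}(l)\psi_i(t)P/(\theta_i e^{-\lambda t}+P)$ is non-decreasing. Together with $w_i\ge 0$, the inductive hypothesis yields $P_i^{1,n}\ge P_i^{2,n}\ge 0$ and hence $\beta_i(t,l,P_i^{1,n})\ge\beta_i(t,l,P_i^{2,n})\ge 0$. Subtracting the linear problems solved by $\p^{1,n+1}$ and $\p^{2,n+1}$, the difference $\p=\p^{1,n+1}-\p^{2,n+1}$ solves a problem of the type of Lemma \ref{lemma1} with effective mortality $\z^1$, vanishing initial datum, boundary datum $\beta_i(t,l,P_i^{1,n})-\beta_i(t,l,P_i^{2,n})\ge 0$ at $a=0$, and a source term $(z_i^2-z_i^1)p_i^{2,n+1}\ge 0$ on the right-hand side.

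The bulk of the work is then a scalar maximum principle: for such a linear problem (nonnegative source, initial and $a=0$ boundary data), the solution is nonnegative. I would test the variational equation against $-\p^-$, mimicking the proof of Lemma \ref{lemma4}. The trace formula of Lemma \ref{lemma:trace}, combined with $\p^-(0,a,l)=0$ and $\p^-(t,0,l)=0$, gives $\ds\int_\O <D\p,-\p^->dtda\ge 0$. The decomposition $e(\p,-\p^-)=-e(\p^+,\p^-)+e(\p^-,\p^-)$, the sign property $e(\p^+,\p^-)\le 0$ already derived in Lemma \ref{lemma4}, and the coercivity of $e$ give $\ds\int_\O e(\p,-\p^-)dtda\ge C_2||\p^-||_V^2$. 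Balanced against the source contribution $-\ds\sum_i\int_\Q(z_i^2-z_i^1)p_i^{2,n+1}p_i^-\le 0$, this forces $\p^-=0$ and closes the induction.

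The most delicate point is the passage to the limit: from $\p^{1,n}\ge\p^{2,n}$ a.e.\ in $\Q$ and the strong $L^2$ convergence provided by Lemma \ref{lemma3}, one extracts a.e.\ convergent subsequences on both sides and concludes $\p^1\ge\p^2$ a.e.\ in $\Q$. The nonnegativity part of Theorem \ref{theo:comparaison} is already Lemma \ref{lemma4}.
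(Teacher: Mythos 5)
Your proposal is correct, and it reaches the conclusion by a route that is organized differently from the paper's. The paper splits the comparison of the iterates into two separate monotonicity facts: Step 1 shows that for a \emph{fixed} mortality the map $\mathcal{F}$ is monotone in its argument (via the monotonicity of $\beta_i$ in $P$ and the positivity argument of Lemma \ref{lemma4}), Step 2 shows that for a \emph{fixed} argument $\hat\p\ge 0$ one has $\mathcal{F}^1\hat\p\ge\mathcal{F}^2\hat\p$ (via the identity $z_i^2p_i^2-z_i^1p_i^1=z_i^1(p_i^2-p_i^1)+p_i^2(z_i^2-z_i^1)$ and the test function $(\p^2-\p^1)^+$), and Step 3 chains the two through an auxiliary iterate $\q^3=\mathcal{F}^2\p^{1,2}$ to propagate $\p^{1,n}\ge\p^{2,n}$. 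You instead compare $\p^{1,n+1}$ and $\p^{2,n+1}$ in a single maximum-principle argument, absorbing simultaneously the ordered boundary data at $a=0$ and the mortality gap, the latter appearing as the nonnegative source $(z_i^2-z_i^1)p_i^{2,n+1}$ when the forms $e^1$ and $e^2$ are compared; note that this is exactly the paper's algebraic identity in disguise, and your test function $-\p^-$ coincides with the paper's $(\p^2-\p^1)^+$ up to the sign convention. Your version is slightly more economical (one induction step instead of two lemmas plus an interleaving trick, and no need to separately establish monotonicity of $\mathcal{F}$ in its argument), at the cost of having to verify that the difference genuinely solves a linear problem of the type covered by Step 1 of Lemma \ref{lemma1} with an $\h\in V'$ right-hand side; the paper's modular structure isolates reusable monotonicity properties of $\mathcal{F}$. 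All the individual sign facts you invoke ($b_i(p^+,p^-)=0$, $c_i(\p^+,p_i^-)\le 0$, $\int_\O\langle D\p,\p^-\rangle\,dtda\le 0$ when the traces at $t=0$ and $a=0$ have vanishing negative part, nonnegativity of the iterates, and passage to the limit via a.e.\ convergent subsequences) are available from Lemmas \ref{lemma:trace}, \ref{lemma1}, \ref{lemma3} and \ref{lemma4} exactly as you use them.
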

\begin{proof}

{\bf Step 1}: Let $\hat{\p}^1$ and $\hat{\p}^2$ be given in $V$
and satisfying $0 \le \hat{\p}^1 \le \hat{\p}^2$. Let
$\p^1=\mathcal{F}\hat{\p}^1$ and $\p^2=\mathcal{F}\hat{\p}^2$ be
the associated solutions defined as in the proof
of Lemma \ref{lemma4}. Let us show that $\p^1 \le \p^2$.\\
It is clear that $\hat{P}_i^1(t) \le \hat{P}_i^2(t)$ a.e in
$(0,T)$, then since $u_i(t,P)$ is an increasing function of $P$ it
holds that $\beta_i(t,l,\hat{P}_i^1(t)) \le
\beta_i(t,l,\hat{P}_i^2(t))$ a.e in $(0,T)\times(0,L)$. The
difference $\p=\p^2-\p^1$ satisfies (\ref{P1}),(\ref{P2}) and
$$
\begin{array}{l}
\p(0,l,a)=0,\\
\p(t,0,a)=(\beta_i(t,l,\hat{P}_i^2(t)))-(\beta_i(t,l,\hat{P}_i^1(t)))
\ge 0.
\end{array}
$$
This is the same situation as in the first part of proof of Lemma
\ref{lemma4} and we conclude that $\p$ is nonnegative that is to
say $\p^1 \le \p^2$.

{\bf Step 2}: Let $\hat{\p} \ge 0$ be given in $V$. To the vectors
of mortality rates $\z^1$ and $\z^2$ ($0 \le \z^1 \le \z^2$) we
associate the bilinear forms $e^1$ and $e^2$ (see
(\ref{eqn:bi})-(\ref{eqn:ei}), note that $c_i^1(.,.)=c_i^2(.,.)$)
as well as the nonlinear operators $\mathcal{F}^1$ and
$\mathcal{F}^2$ defined as in the proof of lemma \ref{lemma4}.
They define the solutions
$\p^1=\mathcal{F}^1\hat{\p}$ and $\p^2=\mathcal{F}^2\hat{\p}$. Let us show that $\p^1 \ge \p^2$.\\
$(\p^2 - \p^1)$ satisfies
\begin{equation}
\label{eqn:raslebol1} \ds \int_O <D(\p^2-\p^1),\q>dtda+\int_\O
[e^2(\p^2,\q)-e^1(\p^1,\q)]dtda=0
\end{equation}
\begin{equation}
\label{eqn:raslebol2} (\p^2-\p^1)(0,a,l)=0
\end{equation}
\begin{equation}
\label{eqn:raslebol3} (\p^2-\p^1)(t,0,l)=0
\end{equation}
Let us choose $\q=(\p^2 - \p^1)^+$. From the equality $z_i^2 p_i^2
- z_i^1 p_i^1 = z_i^1(p_i^2 -p_i^1) + p_i^2(z_i^2-z_i^1)$ follows
that
\begin{equation}
\label{eqn:quitue}
\begin{array}{l}
e_i^2(\p^2,(p_i^2-p_i^1)^+)-e_i^1(\p^1,(p_i^2-p_i^1)^+)\\
=c_i((\p^2-\p^1)^+,(p_i^2-p_i^1)^+)-c_i((\p^2-\p^1)^-,(p_i^2-p_i^1)^+)\\
+b_i((p_i^2-p_i^1)^+,(p_i^2-p_i^1)^+)-b_i((p_i^2-p_i^1)^-,(p_i^2-p_i^1)^+)\\
+\int_0^L p_i^2(f_i^2 -f_i^1)(p_i^2-p_i^1)^+ dl
\end{array}
\end{equation}
We have already shown in the proof of Lemma \ref{lemma4} that
$c_i((\p^2-\p^1)^-,(p_i^2-p_i^1)^+) \le 0$ and that
$b_i((p_i^2-p_i^1)^-,(p_i^2-p_i^1)^+)=0$. Moreover since $\p^2$ is
nonnegative the last term of equality (\ref{eqn:quitue}) is
nonnegative. Then we obtain that
$$
e^2(\p^2,(\p^2-\p^1)^+)-e^1(\p^1,(\p^2-\p^1)^+) \ge
e^1((\p^2-\p^1)^+,(\p^2-\p^1)^+).
$$
Since $(\p^2-\p^1)$ satisfies (\ref{eqn:raslebol2}) and
(\ref{eqn:raslebol3}) it also holds that
$$
\ds \int_O <D(\p^2-\p^1),(\p^2-\p^1)^+>dtda \ge 0,
$$
so that
$$
\ds \int_\O e^1((\p^2-\p^1)^+,(\p^2-\p^1)^+) dtda \le 0,
$$
and using the coercivity of $e^1$ we finally obtain $\p^2 \le
\p^1$.

{\bf Step 3}: Let $\hat{\p} \ge 0$ be given in $V$. We define two
sequences $(\p^{1,n})_{n \ge 1}$ and $(\p^{2,n})_{n \ge 1}$ by
($\p^{1,1}=\hat{\p}$, $\p^{1,n+1}=\mathcal{F}^1\p^{1,n}$) and
($\p^{2,1}=\hat{\p}$, $\p^{2,n+1}=\mathcal{F}^2\p^{2,n}$).\\
From {\bf step 2} follows that $\p^{1,2} \ge \p^{2,2}$.\\
In addition to $\p^{1,3}=\mathcal{F}^1\p^{1,2}$ and
$\p^{2,3}=\mathcal{F}^2\p^{2,2}$,
let us define $\q^{3}=\mathcal{F}^2\p^{1,2}$.\\
The inequality $\p^{1,3} \ge \q^3$ follows from {\bf step 2},
whereas $\p^{2,3} \le \q^3$ follows from {\bf step 1}. Therefore
$\p^{1,3} \ge \p^{2,3}$. An induction then shows that $\p^{1,n}
\ge \p^{2,n},\ \forall n \ge 1$ and since the sequences converge
to the solution $\p^1$ and $\p^2$ of {\bf problem (P)} associated
with the vector of mortality rates $\z^1$ and $\z^2$ respectively,
the proof is complete.
\end{proof}

\section{Concluding remarks}

In this paper we have investigated a multi-region nonlinear
age-size structured fish population model. The model was
formulated in a generic way so that it can be potentially used for
various fish species. We formulated an initial boundary-value
problem and proved existence and uniqueness of a positive weak
solution. We also proved a comparison result which shows that the
variations
in the mortality rate in each region have consequences on the population of fish in every regions.\\
Other important problems need to be addressed now and are
currently under progress. The first one concerns the numerical
implementation of this model. In order to integrate numerically
system (\ref{eqn:e1-1})-(\ref{eqn:e1-3}) we use the characteristic
method. Indeed this system can be viewed as a collection of
systems of parabolic equations on the characteristic lines
$$S =\lbrace (t_0+s,a_0+s);\ s \in (0,s_{max}(t_0,a_0)) \rbrace,$$
where $(t_0,a_0) \in \lbrace 0 \rbrace \times (0,A) \cup (0,T) \times
\lbrace 0 \rbrace$. Each of these systems is then integrated in time with an operator
splitting method using the Lie formula (\cite{Strang:1968}, \cite{Marchuk:1990}).\\
The second problem concerns the estimation of the different badly
known parameters of the model (growth, mortality and migration
rates) from the data available for fisheries and mentioned in the
Introduction. In order to solve numerically this inverse problem,
the implementation of a variational data assimilation method is
under progress. The objective is to obtain a synthetic
representation of the real system combining theoritical knowledge
(the model) and experimental knowledge (the data).






\bibliographystyle{elsart-num}
\bibliography{biblioFauMau}

\begin{thebibliography}{10}
\expandafter\ifx\csname url\endcsname\relax
  \def\url#1{\texttt{#1}}\fi
\expandafter\ifx\csname urlprefix\endcsname\relax\def\urlprefix{URL }\fi

\bibitem{Megrey:1989}
B.~Megrey, Review and comparison of age-structured stock assessment models from
  theoritical and applied points of view, in: E.~Edwards, B.~Megrey (Eds.),
  Mathematical analysis of fish stocks dynamics, Vol.~6, AM. Fish. Soc. Symp.,
  1989, pp. 8--48.

\bibitem{Webb:1985}
G.~Webb, The theory of nonlinear age-dependent population dynamics, Marcel
  Dekker, 1985.

\bibitem{DeAngelis:1993}
D.~DeAngelis, K.~Rose, L.~Crowder, E.~Marschall, D.~Lika, Fish cohort dynamics:
  application of complementary modeling approaches, Am. Nat. 42 (1993)
  604--622.

\bibitem{Swart:1994}
J.~Swart, A.~Meijer, A simplified model for age-dependent population dynamics,
  Math. Biosci. 121 (1994) 15--36.

\bibitem{Arino:1995}
O.~Arino, A survey of structured cell population dynamics, Acta Biotheor. 43
  (1995) 3--25.

\bibitem{Maury:2004}
O.~Maury, B.~Faugeras, V.~Restrepo, {FASST}: A {F}ully {A}ge-{S}ize and
  {S}pace-{T}ime structured statistical model for the assessment of tuna
  populations, ICCAT Coll. Vol. Sci. Pap. in revision.

\bibitem{Langlais:1985}
M.~Langlais, A nonlinear problem in age-dependent population diffusion, SIAM.
  J. Math. Anal. 16~(3) (1985) 510--529.

\bibitem{Pfister:2002}
A.~Pfister, Some consequences of size variability in juvenile prickly sculpin,
  {\it cottus asper}, Environmental Biology of Fishes 66 (2002) 383--390.

\bibitem{Beverton:1996}
R.~Beverton, S.~Holt, On the {D}ynamics of of {E}xploited {F}ish {P}opulations,
  Fish and Fisheries Series 11, Chapman \& Hall, 1996.

\bibitem{Fournier:1990}
D.~A. Fournier, J.~R. Sibert, {MULTIFAN} a {L}ikelihood-{B}ased {M}ethod for
  {E}stimating {G}rowth {P}arameters and {A}ge {C}omposition from {M}ultiple
  {L}ength {F}requency {D}ata {S}ets {I}llustrated using {D}ata for {S}outhern
  {B}luefin {T}una ({\it {t}hunnus maccoyii}), Can. J. Fish. Aquat. Sci. 47
  (1990) 301--317.

\bibitem{Okubo:1980}
A.~Okubo, Diffusion and {E}cological {P}roblems: {M}athematical {M}odels,
  Vol.~10 of Biomathematics, Springer-{V}erlag, 1980.

\bibitem{Garroni:1982}
M.~Garroni, M.~Langlais, Age-{D}ependent {P}opulation {D}iffusion with
  {E}xternal {C}onstraint, J. Math. Biology 14 (1982) 77--94.

\bibitem{Lions:1968}
J.~Lions, E.~Magenes, Probl\`emes aux limites homog\`enes et applications,
  Paris, Dunod, 1968.

\bibitem{Bardos:1970}
C.~Bardos, Probl\`emes aux limites pour les \'equations aux d\'eriv\'ees
  partielles du premier ordre \`a coefficients r\'eels; th\'eor\`emes
  d'approximation; application \`a l'\'equation de transport., Ann. Scient. Ec.
  Norm. Sup.; $4^{eme}$ s\'erie 3 (1970) 185--233.

\bibitem{Strang:1968}
G.~Strang, On the construction and comparison of difference schemes, SIAM J.
  Numer. Anal. 5 (1968) 506--517.

\bibitem{Marchuk:1990}
G.~Marchuk, Splitting and alternating direction methods, in: Handbook of
  numerical analysis, Vol.~I, North-{H}olland, {A}msterdam, 1990, pp. 197--462.

\end{thebibliography}

\end{document}